\theoremstyle{plain}
\newtheorem{lemma}{Lemma}[section]
\newtheorem{theorem}[lemma]{Theorem}
\newtheorem{proposition}[lemma]{Proposition}
\theoremstyle{remark}
\def\bg{\begin{color}{green}}
\def\br{\begin{color}{red}}
\def\eg{\end{color}}
\def\er{\end{color}}
\def\PP{\mathbb{P}}
\def\le{\leqslant}
\def\pd{\partial}
\def\ds{\pd_s}
\def\dt{\pd_t}
\def\du{\pd_u}
\def\dr{\pd_r}
\def\ge{\geqslant}
\def\es{\emptyset}
\def\da{\downarrow}
\def\E{{\mathbb E}}
\def\O{{\Omega}}
\def\R{{\mathbb R}}
\def\N{{\mathbb N}}
\def\a{{\alpha}}
\def\d{{\delta}}
\def\g{{\gamma}}
\def\G{{\Gamma}}
\def\s{{\sigma}}
\def\o{{\omega}}
\def\z{{\zeta}}
\def\cF{{\mathcal F}}
\def\cD{{\mathcal D}}
\def\cW{{\mathcal W}}
\def\q{\quad}
\def\<{\langle}
\def\>{\rangle}
\def\ua{\uparrow}
\def\sse{\subseteq}
\def\sm{\setminus}
\title{Two-parameter stochastic calculus and {M}alliavin's integration-by-parts formula on {W}iener space }
\author{J.R. Norris}
\address{University of Cambridge\\
Statistical Laboratory, Centre for Mathematical Sciences, Wilberforce Road, Cambridge, CB3 0WB, UK}
\email{j.r.norris@statslab.cam.ac.uk}
\begin{document}
\bibliographystyle{plain}

\begin{abstract}
The integration-by-parts formula discovered by Malliavin for the It\^ o map
on Wiener space is proved using the two-parameter stochastic calculus. It is
also shown that the solution of a one-parameter stochastic differential
equation driven by a two-parameter semimartingale is itself a 
two-parameter semimartingale.
\end{abstract}

\maketitle


\section{Introduction}
The stochastic calculus of variations was conceived by Malliavin \cite{MR517243,
MR517250,MR536013} as
follows. Let $(z_t)_{t\ge0}$ denote the Ornstein--Uhlenbeck process on Wiener
space $(W,\cW,\mu)$ and let $\Phi:W\to\R^d$ denote the (almost-everywhere
unique) It\^ o map obtained by solving a stochastic differential equation
in $\R^d$ up to time $1$. Then $(z_t)_{t\ge0}$
is stationary and reversible, so, for 
functions $f,g$ on $\R^d$, setting $F=f\circ\Phi,G=g\circ\Phi$,
\begin{equation}\label{REV}
\E\left[\{F(z_t)-F(z_0)\}\{G(z_t)-G(z_0)\}\right]
=-2\E\left[F(z_0)\{G(z_t)-G(z_0)\}\right].
\end{equation}
Once certain terms of mean zero are subtracted, a
differentiation of this identity with respect to $t$ {\em inside the expectation} is possible, 
and leads to the integration-by-parts formula on Wiener space
\begin{equation}\label{IBP}
\int_W\nabla_i f(\Phi)\G^{ij}\nabla_j g(\Phi)d\mu=-\int_W f(\Phi)LGd\mu,
\end{equation}
where $LG$ and the {\em covariance matrix} $\G$ will be defined below.
As is now well known, this formula and its generalizations hold the key 
to many deep results of stochastic analysis.

Malliavin's proof of the integration-by-parts formula was based on a
{\em transfer principle}, allowing some calculations for two-parameter
random processes to be made using classical differential calculus.
Stroock \cite{MR642917,MR603973,MR616961}
and Shigekawa \cite{MR582167}
gave alternative derivations having a a more functional-analytic 
flavour. Bismut \cite{MR621660} gave another derivation based on the 
Cameron--Martin--Girsanov formula. Elliott and Kohlmann \cite{MR972781}
and Elworthy and Li \cite{MR1297021}
found further elementary approaches to the formula.
The alternative proofs are
relatively straightforward. Nevertheless, we have found it interesting
to go back to Malliavin's original approach in \cite{MR536013} and to review the calculations
needed, especially since this can be done now in a more explicit way 
using the two-parameter stochastic calculus, as formulated in \cite{MR1347353}.

In Section \ref{OU} we review in greater detail the various mathematical
objects mentioned above. Then, in Section \ref{TS}, we review some
points of two-parameter stochastic calculus from \cite{MR1347353}.
Section \ref{NRR} contains the main technical result of the paper, which is a regularity property for
two-parameter stochastic differential equations. We consider
equations in which some components are given by two-parameter
integrals and others by one-parameter integrals. It is shown, under suitable
hypotheses, that the components which are presented as one-parameter integrals are 
in fact two-parameter semimartingales. This is useful because one can then compute martingale properties
for both parameters by stochastic calculus. The sorts of differential equation to which this
theory applies are just one way to realise continuous random processes indexed by the plane. See
the survey \cite{MR2216962} by L\' eandre for a wider discussion. 
But this regularity property makes our processes more tractable to analyse than some others.
This is illustrated in Section \ref{C}, where we do the calculations
needed to obtain the integration-by-parts formula.

\section{Integration-by-parts formula}\label{OU}
The Wiener space $(W,\cW,\mu)$ over $\R^m$ is a probability space
with underlying set $W=C([0,\infty),\R^m)$, the set of continuous paths in $\R^m$. 
Let $\cW^o$ denote the 
$\s$-algebra on $W$
generated by the family of coordinate functions $w\mapsto w_s:W\to\R^m$, 
$s\ge0$, and let $\mu^o$ be Wiener measure on $\cW^o$, that is to say,
the law of a Brownian motion in $\R^m$ starting from $0$.
Then $(W,\cW,\mu)$ is the completion of the probability space
$(W,\cW^o,\mu^o)$.
Write $\cW_s$ for the $\mu$-completion of $\s(w\mapsto w_r:r\le s)$.
Let $X_0,X_1,\dots,X_m$ be vector fields on $\R^d$, with bounded
derivatives of all orders. Fix $x_0\in\R^d$ and consider the 
stochastic differential equation
$$
\pd x_s=X_i(x_s)\pd w^i_s+X_0(x_s)\pd s.
$$
Here and below, the index $i$ is summed from $1$ to $m$, and $\pd$ denotes the
Stratonovich differential. There exists a map
$x:[0,\infty)\times W\to\R^d$ with the following properties:
\begin{itemize}
\item $x$ is a continuous semimartingale on $(W,\cW,(\cW_s)_{s\ge0},\mu)$,
\item for $\mu$-almost all $w\in W$, for all $s\ge0$ we have
$$
x_s(w)=x_0+\int_0^s X_i(x_r(w))\pd w^i_r+\int_0^s X_0(x_r(w))dr.
$$
\end{itemize}
The first integral in this equation is the Stratonovich stochastic integral.
Moreover, for any other
such map $x'$, we have $x_s(w)=x'_s(w)$ for all $s\ge0$, for $\mu$-almost all
$w$.  We have chosen here a Stratonovich rather than an It\^ o formulation 
to be consistent with later sections, where we have made this choice
in order to
take advantage of the simpler calculations which the 
Stratonovich calculus allows.
The It\^ o map referred to above is the map $\Phi(w)=x_1(w)$.

We can define on some complete probability space,
$(\O,\cF,\PP)$ say, a two-parameter,
continuous, zero-mean Gaussian field $(z_{st}:s,t\ge0)$ with values in
$\R^m$, and with covariances given by
$$
\E(z_{st}^iz_{s't'}^j)=\d^{ij}(s\wedge s')e^{-|t-t'|/2}.
$$
Such a field is called an Ornstein--Uhlenbeck sheet. Set $z_t=(z_{st}:s\ge0)$.
Then, for $t>0$, both $z_0$ and $z_t$ are Brownian motions in $\R^m$ and
$(z_0,z_t)$ and $(z_t,z_0)$ have the same distribution.  We have now defined
all the terms in, and have justified, the identity (\ref{REV}).

Consider the following stochastic differential equation for an 
unknown process $(U_s:s\ge0)$ in the space of $d\times d$ matrices
$$
\pd U_s=\nabla X_i(x_s)U_s\pd w_s^i+\nabla X_0(x_s)U_s\pd s,\q U_0=I.
$$
This equation may be solved, jointly with the equation for $x$, in exactly
the same sense as the equation for $x$ alone. Thus we obtain a map
$U:[0,\infty)\times W\to \R^d\otimes(\R^d)^*$, with properties
analogous to those of $x$. Moreover, by solving
an equation for the inverse, we can see 
that $U_s(w)$ remains invertible for all $s\ge0$,
for almost all $w$. Write $U^*_s$ for the transpose matrix and set 
$\G_s=U_sC_sU_s^*$, where 
$$
C_s=\int_0^sU^{-1}_rX_i(x_r)\otimes U^{-1}_rX_i(x_r)dr.
$$
Set also 
\begin{align*}
L_s=-U_s\int_0^sU^{-1}_rX_i(x_r)\pd w_r^i
&+U_s\int_0^sU^{-1}_r\{\nabla^2X_i(x_r)\pd w^i_r+\nabla^2X_0(x_r)dr\}\G_r,\\
&+U_s\int_0^sU^{-1}_r\nabla X_i(x_r)X_i(x_r)dr
\end{align*}
and define for $G=g\circ\Phi$
$$
LG=L_1^i\nabla_ig(x_1)+\G_1^{ij}\nabla_i\nabla_jg(x_1).
$$
We have now defined all the terms appearing in the integration-by-parts 
formula (\ref{IBP}). We will give a proof in Section \ref{C}.

\section{Review of two-parameter stochastic calculus}\label{TS}

In \cite{MR1347353}, building on the fundamental works
of Cairoli and Walsh \cite{MR0420845} 
and Wong and Zakai \cite{MR0370758,MR0651571}, we gave an account of 
two-parameter stochastic calculus, suitable for the development
of a general theory of two-parameter hyperbolic stochastic differential
equations. We recall here, for the reader's convenience, the main
features of this account.

We take as our probability space $(\O,\cF,\PP)$ the canonical complete 
probability space of an $m$-dimensional Brownian sheet $(w_{st}:s,t\ge0)$,
extended to a process $(w_{st}:s,t\in\R)$ 
by independent copies in the other three quadrants.
Thus $w_{st}=(w^1_{st},\dots,w^m_{st})$ is a continuous, zero-mean Gaussian 
process, with covariances given by
$$
\E(w_{st}^iw_{s't'}^j)=\d^{ij}(s\wedge s')(t\wedge t'),\q
i,j=1,\dots,m,\q
s,t\ge0,\q s',t'\ge0.
$$
It will be convenient to define also $w^0_{st}=st$ for all $s,t\in\R$.
For $s,t\ge0$, write $\cF_{st}$ for the completion with respect to $\PP$
of the $\s$-algebra generated by $w_{ru}$ for $r\in(-\infty,s]$ and $u\in(-\infty,t]$.
We say that a two-parameter process $(x_{st}:s,t\ge0)$ is {\em adapted}
if $x_{st}$ is $\cF_{st}$-measurable for all $s,t\ge0$,
and is {\em continuous} 
if $(s,t)\mapsto x_{st}(\o)$ is continuous on $(\R^+)^2$
for all $\o\in\O$. The
{\em previsible} $\s$-algebra on $\O\times(\R^+)^2$
is that generated by sets of the form 
$A\times(s,s']\times(t,t']$ with $A\in\cF_{st}$.
If we allow $A\in\cF_{s\infty}$ in this definition, we get the {\em $s$-previsible} $\s$-algebra.

The classical approach to defining stochastic integrals,
by means of an isometry of  Hilbert spaces, adapts in a straightforward way
from one-dimensional times to two, allowing the construction of
stochastic integrals with respect to certain two-parameter processes,
in particular with respect to the Brownian sheet.
\def\rootnote{In 
the one-parameter theory, whilst It\^ o's original integrals were
defined with respect to Brownian motion, it was later found that this
could be generalized to define integrals with respect to 
all continuous martingales, relying on the properties of the
quadratic variation process. This general context had become accepted
as the right one for stochastic integrals when the two-parameter
theory was under development. However it did not prove possible to
generalize this in all respects to two-parameters. In retrospect, one can 
perhaps see that the generality possible in one-parameter is a freak,
possible only because a time-reparametrization brings the most general
quadratic variation back to that of Brownian motion.
The most satisfactory theory in two-parameters seems to us to take
as integrators some classes of processes which are themselves defined as
integrals with respect to the Brownian sheet.}
Given an $s$-previsible process\footnote{We
write any time parameter with respect to which a process is previsible, here $s$,
as a subscript. Where previsibility is not assumed, here in $t$, we write the parameter in parentheses.}
$(a_s(t):s,t\ge0)$, such that 
$$
\E\int_0^s\int_0^ta_r(u)^2drdu<\infty
$$
for all $s,t\ge0$, we can define, for $i=1,\dots,m$ and all
$t_1,t_2\ge0$ with $t_1\le t_2$,
one-parameter processes $M$ and $A$ by
\begin{equation}\label{II}
M_s=\int_0^s\int_{t_1}^{t_2}a_r(t)d_rd_tw_{rt}^i,\q
A_s=\int_0^s\int_{t_1}^{t_2}a_r(t)^2drdt.
\end{equation}
Then $M$ is a continuous $(\cF_{s\infty})_{s\ge0}$-martingale,
with quadratic variation process $[M]=A$.
A localization argument by adapted initial open sets (see below)
allows an extension of the integral under weaker integrability conditions. 
By the Burkholder--Davis--Gundy inequalities, for all $\a\in[2,\infty)$, there is a
constant $C(\a)<\infty$ such that
\begin{equation}\label{BDG}
\E\left(\left|\int_{s_1}^{s_2}\int_{t_1}^{t_2}a_s(t)d_sd_tw_{st}^i\right|^\a\right)
\le C(\a)\E\left(\left|\int_{s_1}^{s_2}\int_{t_1}^{t_2}a_s(t)^2dsdt\right|^{\a/2}\right).
\end{equation}

By an $(s,t)$-{\em semimartingale}, $s$-{\em semimartingale},
{\em $t$-semimartingale},
we mean, respectively, previsible
processes $(x_{st}:s,t\ge0)$, $(p_{st}:s,t\ge0)$, $(q_{st}:s,t\ge0)$
for which we may write
\begin{align*}
&x_{st}-x_{s0}-x_{0t}+x_{00}\\
&\q\q=\sum_{i=0}^m\int_0^s\int_0^t(x_{ru}'')_id_rd_uw_{ru}^i
+\sum_{i,j=0}^m\int_0^s\int_{-1}^t\left(\int_{-1}^s\int_0^t
(x_{ru}''(r',u'))_{ij}d_{r'}d_uw_{r'u}^j\right)d_rd_{u'}w^i_{ru'}\\
\end{align*}
and 
$$
p_{st}-p_{0t}
=\sum_{i=0}^m\int_0^s\int_{-1}^t(p_{rt}'(u'))_id_rd_{u'}w_{ru'}^i,\q
q_{st}-q_{s0}
=\sum_{i=0}^m\int_{-1}^s\int_0^t(q_{su}'(r'))_id_{r'}d_uw_{r'u}^i.
$$
Here, $(x_{st}'':s,t\ge0)$ is a previsible process, having
components $(x_{st}'')_i$, subject to
certain local integrability conditions, which are implied, in particular,
by almost sure local boundedness. 
The process $(x_{st}''(r,u):s,t\ge0, r,u\in\R)$ is required
to be previsible in $(\o,s,t)$ and (Borel) measurable in $(r,u)$, 
with $x_{st}''(r,u)=0$ for $r>s$ or $u>t$, and is subject to 
similar local integrability conditions.
The inner and outer parts
of the second integral are both cases of the stochastic integral at (\ref{II}), or its $t$-analogue,
or of the usual Lebesgue integral,
and the value of the iterated integral is unchanged if we reverse the order in which
the integrals are taken.
The integrals appearing in the expression for $x_{st}$ are called stochastic
integrals of the {\em first} and {\em second kind}.
The processes $(p_{st}'(u):s,t\ge0, u\in\R)$ 
and $(q_{st}'(r):s,t\ge0, r\in\R)$ are required
to be previsible in $(\o,s,t)$ and measurable in $u$ and
$r$, respectively, with $p_{st}'(u)=0$ for $u>t$ and $q_{st}'(r)=0$ for $r>s$,
and are subject to similar local integrability conditions.
For fixed $t\ge0$, if $(x_{s0}:s\ge0)$ is a continuous $(\cF_{s0})_{s\ge0}$-semimartingale,
then $(x_{st}:s\ge0)$ is a continuous $(\cF_{st})_{s\ge0}$-semimartingale,
in the usual one-parameter sense.
Also $(p_{st}:s\ge0)$ is a continuous $(\cF_{st})_{s\ge0}$-semimartingale, for all $t\ge0$.

The heuristic formulae
\begin{align*}
d_sd_tx_{st}
&=\sum_{i=0}^m(x_{st}'')_id_sd_tw_{st}^i
+\sum_{i,j=0}^m\int_{-1}^s\int_{-1}^t(x_{st}''(r,u))_{ij}
d_sd_uw_{su}^id_rd_tw^j_{rt},\\
d_sp_{st}
&=\sum_{i=0}^m\int_{-1}^t(p_{st}'(u))_id_sd_uw_{su}^i,\\
d_tq_{st}
&=\sum_{i=0}^m\int_{-1}^s(q_{st}'(r))_id_rd_tw_{rt}^i
\end{align*}
provide a good intuition in representing the two-parameter
increment 
$$
d_sd_tx_{st}=x_{s+ds,t+dt}-x_{s,t+dt}-x_{s+ds,t}+x_{st}
$$
and the one-parameter increments $d_sp_{st}=p_{s+ds,t}-p_{st}$
and $d_tq_{st}=q_{s,t+dt}-q_{st}$
in terms of a linear combinations of increments, and of 
products of increments of the Brownian sheet.

By a ({\em two-parameter}) {\em semimartingale}, we mean a process
which is at the same time an $(s,t)$-semimartingale, an
$s$-semimartingale and a $t$-semimartingale.
Such processes are necessarily continuous. 
An $(s,t)$-semimartingale which is constant on the $s$-axis and $t$-axis
is a semimartingale.
By an obvious choice of integrands, the process
$(w_{st}:s,t\ge0)$ is itself a semimartingale.
The choice of lower limit $-1$ is useful
to us in allowing as semimartingales a pair of independent
$\R^m$-valued Brownian motions $(z_{s0}:s\ge0)$ and
$(b_{0t}:t\ge0)$, given by
$$
z_{s0}=\int_0^s\int_{-1}^0d_rd_u w_{ru},\q
b_{0t}=\int_{-1}^0\int_0^td_rd_u w_{ru},
$$
which are moreover independent of $(w_{st}:s,t\ge0)$.
Here and below, we bring one-parameter processes defined on the $s$ or $t$ 
axes into the class of two-parameter processes by extending them as constant
in the second parameter. 

We say that a subset $\cD\sse(\R^+)^2$ is an {\em initial open set} if it is non-empty
and is a union of 
rectangles of the form $[0,s)\times[0,t)$, where $s,t\ge0$.
A random subset $\cD\sse\O\times(\R^+)^2$ is {\em adapted}
if the event $\{(s,t)\in\cD\}$ is $\cF_{st}$-measurable for all $s,t\ge0$.
For an adapted initial open set $\cD$, a process $(x_{st}:(s,t)\in\cD)$
is a {\em semimartingale in} $\cD$ if there exists a sequence of adapted initial
open sets $\cD_n\ua\cD$, almost surely, and a sequence of 
semimartingales $(x^n_{st}:s,t\ge0)$, such that $x_{st}=x^n_{st}$ 
for all $(s,t)\in\cD_n$ for all
$n$. The notion of an {\em $s$-semimartingale in} $\cD$ is defined analogously.
We write $\z(\cD)$ for the boundary of $\cD$ as a subset of $(\R^+)^2$.
In particular, if $\cD=(\R^+)^2$, then $\z(\cD)=\es$.

The theory which we now describe is symmetrical in $s$ and $t$. Where 
a statement is made for $s$, there is also a corresponding statement for $t$,
which we shall often omit.
Let $(x_{st}:s,t\ge0)$ and $(x'_{st}:s,t\ge0)$ be $s$-semimartingales
and let $(a_{st}:s,t\ge0)$ be a locally bounded previsible process, for example, a continuous
adapted process.
There exist $s$-semimartingales which, for each $t\ge0$, provide
versions of the one-parameter stochastic integral and the 
one-parameter covariation process
$$
\z^1_{st}=\int_0^sa_{rt}d_rx_{rt},\q
\z^2_{st}=\int_0^sd_rx_{rt}d_rx'_{rt}.
$$
From now on, when we write these integrals, we assume that such a 
version has been chosen.
We define also four types of two-parameter integral, each of which is
a (two-parameter) semimartingale. These are written
$$
\z_{st}^3=\int_0^s\int_0^ta_{ru}d_rd_ux_{ru},\q
\z_{st}^4=\int_0^s\int_0^td_rx_{ru}d_uy_{ru},
$$
$$
\z_{st}^5=\int_0^s\int_0^td_rx_{ru}d_rd_uy_{ru},\q
\z_{st}^6=\int_0^s\int_0^td_rd_ux_{ru}d_rd_uy_{ru}.
$$
In the first and last integral, we require $x$ to be an $(s,t)$-semimartingale,
whereas, in the second and third, $x$ should be an 
$s$-semimartingale. We require that $y$ be a $t$-semimartingale in the
second integral and an $(s,t)$-semimartingale in the third and fourth. 
All these integrals  are defined as sums of certain integrals of the first 
and second kind with respect to the Brownian sheet. 
We refer to \cite{MR1347353} for the details. 
We use the following differential notations:
\begin{align*}
d_sz_{st}&=a_{st}d_sx_{st} &&\text{means}& z_{st}-z_{0t}&=\z_{st}^1,\\
d_sz_{st}&=d_sx_{st}d_sx'_{st} &&\text{means}& z_{st}-z_{0t}&=\z_{st}^2,\\
d_sd_tz_{st}&=a_{st}d_sd_tx_{st} &&\text{means}& z_{st}-z_{s0}-z_{0t}+z_{00}&=\z_{st}^3,\\
d_sd_tz_{st}&=d_sx_{st}d_ty_{st} &&\text{means}& z_{st}-z_{s0}-z_{0t}+z_{00}&=\z_{st}^4,\\
d_sd_tz_{st}&=d_sx_{st}d_sd_ty_{st} &&\text{means}& z_{st}-z_{s0}-z_{0t}+z_{00}&=\z_{st}^5,\\
d_sd_tz_{st}&=d_sd_tx_{st}d_sd_ty_{st} &&\text{means}& z_{st}-z_{s0}-z_{0t}+z_{00}&=\z_{st}^6.
\end{align*}
The integrals $\z_{st}^2$, $\z_{st}^5$ and $\z_{st}^6$ all vanish if $d_sx_{st}=a_{st}ds$.
It is shown in \cite{MR1347353} that a series of identities hold among
the various types of integral, which can be expressed conveniently in terms of
this differential notation. Some identities assert the associativity of
products involving a combination of three
differentials or processes, the others are written as the 
following three rules
\begin{align*}
d_s(f(x_{st}))&=f'(x_{st})d_sx_{st}+\tfrac12 f''(x_{st})d_sx_{st}d_sx_{st},\\
d_s(a_{st}d_tx_{st})&=d_sa_{st}d_tx_{st}+a_{st}d_sd_tx_{st}
+d_sa_{st}d_sd_tx_{st},\\
d_s(d_tx_{st}d_ty_{st})&=d_sd_tx_{st}d_ty_{st}+d_tx_{st}d_sd_ty_{st}
+d_sd_tx_{st}d_sd_ty_{st}.
\end{align*}
These rules combine the usual calculus of partial 
differentials with It\^ o calculus in an obvious way. 
As a consequence, we can obtain a geometrically simpler 
Stratonovich-type calculus by defining, for processes $(x_{st}:s,t\ge0)$
and $(y_{st}:s,t\ge0)$, some further integrals, corresponding
to the following differential rules
$$
X_{st}\ds X_{st} =X_{st}dY_{st}+\tfrac12d_sX_{st}d_sY_{st},\q
\ds X_{st}\ds Y_{st}=\ds X_{st} d_s Y_{st}=d_sX_{st}d_sY_{st},
$$
where $X_{st}$ may stand for any one of $x_{st},d_tx_{st}$,$\dt x_{st}$
and $Y_{st}$ may stand for any one of $y_{st},d_ty_{st}$,$\dt y_{st}$.
Then we have
\begin{align*}
\ds(f(x_{st}))&=f'(x_{st})\ds x_{st},\\
\ds (a_{st}\dt x_{st})&=\ds a_{st}\dt x_{st}+a_{st}\ds\dt x_{st},\\
\ds (\dt x_{st}\dt y_{st})&=\ds\dt x_{st}\dt y_{st}+\dt x_{st}\ds\dt y_{st}.
\end{align*}
The Brownian sheet $(w_{st}:s,t\ge0)$ and the boundary Brownian
motions $(z_{s0}:s\ge0)$ and $(b_{0t}:t\ge0)$ have some special
properties, which are reflected in the following differential
formulae, for $1\le i,j\le m$,
$$
d_sd_tw^i_{st}d_sd_tw_{st}^j=\d^{ij}dsdt,\q
d_sz_{s0}^id_sz_{s0}^j=\d^{ij}ds,\q
d_tb_{0t}^id_tb_{0t}^j=\d^{ij}dt,
$$
and, for any semimartingale $(x_{st}:s,t\ge0)$,
$$
d_sx_{st}d_sd_tw^i_{st}=d_tx_{st}d_sd_tw_{st}^i=0.
$$

\section{A regularity result for two-parameter stochastic differential
equations}\label{NRR}
We discussed in  \cite{MR1347353} a class of
two-parameter hyperbolic stochastic differential equations,
in which there is given, for a system
of processes $(x_{st},p_{st},q_{st}:s,t\ge0)$, one equation for the
mixed second-order differential $d_sd_tx_{st}$, together with two further
equations for the one-parameter differentials $d_sp_{st}$ and $d_tq_{st}$.
We review briefly the details below, and then give
a new regularity result, which we need for our application to Malliavin's
integration-by-parts formula, but which may be of independent interest.
This result concerns 
the process $(p_{st}:s,t\ge0)$ (and analogously also
$(q_{st}:s,t\ge0)$), which, since integrated in $s$, has naturally
the regularity of an $s$-semimartingale. The point at issue is 
whether $(p_{st}:s,t\ge0)$ is a full (two-parameter) semimartingale.
A method to establish this is
stated in \cite[pp. 299, 315-316]{MR1347353}, but the argument given is 
incomplete. A full proof is given below in Theorem \ref{TPS}.
As an illustrative example, we note that, if $(w_{st}:s,t\ge0)$
is a Brownian sheet with values in $\R^m$, then the result will show that
there is a two-parameter semimartingale $(x_{st}:s,t\ge0)$ such that,
for all $t\ge0$, the process $(x_{st}:s\ge0)$ satisfies the one-parameter
stochastic differential equation
$$
\ds x_{st}=X_i(x_{st})\ds w^i_{st}+X_0(x_{st})\pd s,
$$
with given initial values $x_{0t}=x_0$, say. This is useful because, now, despite 
the irregular dependence of the Brownian sheet on $t$, we can use a differential
calculus in $t$ as well as in $s$.

Consider the class of hyperbolic stochastic differential 
equations in $(\R^+)^2$ of the form
\begin{align}
d_sd_tx_{st}&=a(d_sd_tw_{st})+b(d_sx_{st},d_tx_{st}),\label{EX}\\
d_sp_{st}&=c(d_sx_{st}),\label{EQ}\\
d_tq_{st}&=e(d_tx_{st})\label{EP}.
\end{align}
Here $w_{st}=(
w^1_{st},\dots,w_{st}^m)$,
with $(w_{st}^i:s,t\ge0)$, $i=1,\dots,m$, independent Brownian sheets,
as above. The unknown processes $(x_{st}:s,t\ge0)$, 
$(p_{st}:s,t\ge0)$ and $(q_{st}:s,t\ge0)$ 
take values in $\R^d$, $\R^n$ and $\R^n$, respectively, 
and are subject to given boundary values
$(x_{s0}:s\ge0)$, $(x_{0t}:t\ge0)$, both assumed to be semimartingales, 
and $(p_{0t}:t\ge0)$, $(q_{s0}:s\ge0)$, both assumed continuous and adapted.
The coefficients $a,b,c,e$ are allowed to have a locally Lipschitz 
dependence on the unknown processes, with the restriction that $b$ depends only on $x$.
Thus, for example, we would write
$a(x_{st},p_{st},q_{st},d_sd_tw_{st})$ and $b(x_{st},d_sx_{st},d_tx_{st})$, 
but have not done so in order to keep the notation compact. 
Moreover, we allow a dependence on the differentials which is a sum
of linear and quadratic terms. Thus, in an expanded notation, we would write
\begin{align*}
d_sd_tx_{st}&=a_1(d_sd_tw_{st})+a_2(d_sd_tw_{st},d_sd_tw_{st})\\
  &\q+b_{11}(d_sx_{st},d_tx_{st})+b_{12}(d_sx_{st},d_tx_{st},d_tx_{st}),\\
  &\q+b_{21}(d_sx_{st},d_sx_{st},d_tx_{st})+b_{22}(d_sx_{st},d_sx_{st},d_tx_{st},d_tx_{st}),\\
d_sp_{st}&=c_1(d_sx_{st})+c_2(d_sx_{st},d_sx_{st}),\\
d_tq_{st}&=e_1(d_tx_{st})+e_2(d_tx_{st},d_tx_{st}),
\end{align*}
where, for $i,j,k=1,2$,
\begin{align*}
a_i:\R^d\times\R^n\times\R^n&\to\R^d\otimes((\R^m)^*)^{\otimes i},\\
b_{jk}:\R^d&\to\R^d\otimes((\R^d)^*)^{\otimes j+k},\\
c_j:\R^d\times\R^n\times\R^n&\to\R^n\otimes((\R^d)^*)^{\otimes j},\\
e_k:\R^d\times\R^n\times\R^n&\to\R^n\otimes((\R^d)^*)^{\otimes k}.
\end{align*}
We may and do assume with loss that $a_2,b_{12}$,$b_{21},b_{22}$,$c_2,e_2$ are 
symmetric in any pair of repeated differential arguments.

By a {\em local solution of} (\ref{EX}--\ref{EP}) {\em with domain $\cD$} 
we mean an adapted initial open set $\cD$,
together with a semimartingale $(x_{st}:(s,t)\in\cD)$,
an $s$-semimartingale $(p_{st}:(s,t)\in\cD)$,
and a $t$-semimartingale $(q_{st}:(s,t)\in\cD)$,
all continuous on $\cD$, such that, for all $(s,t)\in\cD$,
\begin{align*}
x_{st}&=x_{s0}+x_{0t}-x_{00}+\int_0^s\int_0^ta(d_rd_uw_{ru})+\int_0^s\int_0^tb(d_rx_{ru},d_ux_{ru}),\\
p_{st}&=p_{0t}+\int_0^sc(d_rx_{rt}),\\
q_{st}&=q_{s0}+\int_0^te(d_ux_{su}).
\end{align*}
Given such a solution, for each $t\ge0$, we can define 
processes $(u_{st}:(s,t)\in\cD)$ and $(u_{st}^*:(s,t)\in\cD)$, taking values
in $\R^d\times(\R^d)^*$ and $\R^d\times(\R^d)^*\times(\R^d)^*$ respectively, by solving the linear one-parameter
stochastic differential equations
\begin{align}
d_su_{st}&=b_{11}(d_sx_{st},\cdot)u_{st}+b_{12}(d_sx_{st},d_sx_{st},\cdot)u_{st},\label{EU}\\
d_su_{st}^*&=u_{st}^{-1}\{b_{12}(d_sx_{st},u_{st}\cdot,u_{st}\cdot)\notag\\
&\q\q+b_{22}(d_sx_{st},d_sx_{st},u_{st}\cdot,u_{st}\cdot)
-b_{11}(d_sx_{st},b_{12}(d_sx_{st},u_{st}\cdot,u_{st}\cdot))\}.\label{EUS}
\end{align}
Here $u_{st}^{-1}$ denotes the inverse of the linear map $u_{st}$. For fixed $t\ge0$, 
almost surely, $u_{st}$ remains in the set of invertible maps while $(s,t)\in\cD$.
To see this, one can obtain formally a linear equation for the process $(u_{st}^{-1}:(s,t)\in\cD)$,
and then check that its solution is indeed an inverse for $u_{st}$.
Similarly, for each $s\ge0$, we can define
processes $(v_{st}:(s,t)\in\cD)$ and $(v_{st}^*:(s,t)\in\cD)$, taking values
in $\R^d\times(\R^d)^*$ and $\R^d\times(\R^d)^*\times(\R^d)^*$, by solving the analogous equations
\begin{align}
d_tv_{st}&=b_{11}(\cdot,d_tx_{st})v_{st}+b_{21}(\cdot,d_tx_{st},d_tx_{st})v_{st}.\label{EV}\\
d_tv_{st}^*&=v_{st}^{-1}\{b_{21}(v_{st}\cdot,v_{st}\cdot,d_tx_{st})\notag\\
&\q\q+b_{22}(v_{st}\cdot,v_{st}\cdot,d_tx_{st},d_tx_{st})
-b_{11}(b_{21}(v_{st}\cdot,v_{st}\cdot,d_tx_{st}),d_tx_{st})\}.\label{EVS}
\end{align}
We specify initial conditions $u_{00}=v_{00}=I$, so determining
completely $(u_{0s}:s\ge0)$ and $(v_{0t}:t\ge0)$. Then we complete the determination
of the above processes by specifying that $u_{0t}=v_{0t}$, $u_{0t}^*=0$, $v_{s0}=u_{s0}$, and $v_{s0}^*=0$
for all $s,t\ge0$.
Let us say that $(x_{st},p_{st},q_{st}:(s,t)\in\cD)$ is a {\em regular} local solution\footnote{It is not hard 
to see that, for any local solution, the processes just defined have previsible versions,
which are then $s$-semimartingales or $t$-semimartingales, depending on the variable of integration. 
However, we have not determined whether they have a continuous version in general.} 
if there exist continuous $s$-semimartingales $(u_{st}:(s,t)\in\cD)$ and $(u_{st}^*:(s,t)\in\cD)$
satisfying, for each $t\ge0$, the equations (\ref{EU}--\ref{EUS}), and if there exist also 
continuous $t$-semimartingales $(v_{st}:(s,t)\in\cD)$ and $(v_{st}^*:(s,t)\in\cD)$
satisfying, for each $s\ge0$, the equations (\ref{EV}--\ref{EVS}).
A local solution is {\em maximal} if it is not the restriction of any local solution
with larger domain. The notion of a maximal regular local solution is defined analogously.
We assume that the boundary semimartingales 
$(x_{s0}:s\ge0)$, $(x_{0t}:t\ge0)$, $(p_{0t}:t\ge0)$ and $(q_{s0}:s\ge0)$
are {\em regular}\footnote{No connection with the notion of regular local solution is intended.}. 
By this we mean that the Lebesgue--Stieltjes measures defined by their
quadratic variation processes and by the total variation processes of their finite
variation parts are all dominated by $Kds$, or $Kdt$ as appropriate, for some constant $K<\infty$.
We give a result first for the case where $b=0$.

\begin{lemma}\label{BEZ}
Assume that $b=0$. Let $U$ be an open subset of $\R^d\times\R^n\times\R^n$ and let $m:U\to[0,\infty)$ 
be a continuous function with $m(x,p,q)\to\infty$ as $(x,p,q)\to\pd U$.
Assume that, for all $M\ge1$, the coefficients $a,c,e$ are bounded and Lipschitz
on the set $U_M=\{(x,p,q)\in U:m(x,p,q)<M\}$. 
Then, for any set of regular boundary semimartingales
$(x_{s0}:s\ge0)$, $(x_{0t}:t\ge0)$, $(p_{0t}:t\ge0)$ and $(q_{s0}:s\ge0)$,
with $(x_{00},p_{00},q_{00})\in U$,
the equations {\rm (\ref{EX}--\ref{EP})} have a unique maximal local solution $(x_{st},p_{st},q_{st}:(s,t)\in\cD)$
with values in $U$.
Moreover, we have, almost surely\footnote{To clarify, we mean that, for all $(s^*,t^*)\in\z(\cD)$,
the given limit holds whenever $(s,t)\ua(s^*,t^*)$. In particular, in the case where $\cD=(\R^+)^2$,
there are no such points $(s^*,t^*)$ and nothing is claimed.}
$$
\sup_{r\le s,u\le t}m(x_{ru},p_{ru},q_{ru})\to\infty\q\text{as}\q(s,t)\ua\z(\cD).
$$
\end{lemma}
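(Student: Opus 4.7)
My plan is Picard iteration for truncated coefficients, followed by localization through adapted initial open sets. Since $b=0$, the system (\ref{EX}--\ref{EP}) reads in integral form
$$
x_{st}=x_{s0}+x_{0t}-x_{00}+\int_0^s\int_0^t a(d_rd_uw_{ru}),\q
p_{st}=p_{0t}+\int_0^s c(d_rx_{rt}),\q
q_{st}=q_{s0}+\int_0^t e(d_ux_{su}),
$$
with $a,c,e$ evaluated at $(x_{ru},p_{ru},q_{ru})$. Once $x$ is known, the integrals defining $p$ and $q$ are one-parameter integrals against the $s$- and $t$-semimartingales $(x_{rt})_r$ and $(x_{su})_u$. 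For $M\ge1$, I extend $a,c,e$ from $U_M$ to bounded Lipschitz functions $a^M,c^M,e^M$ on all of $\R^d\times\R^n\times\R^n$, and set up the Picard iteration with initial iterate $x^{(0)}_{st}=x_{s0}+x_{0t}-x_{00}$, $p^{(0)}_{st}=p_{0t}$, $q^{(0)}_{st}=q_{s0}$. Inductively, each iterate is a semimartingale / $s$-semimartingale / $t$-semimartingale triple with locally bounded coefficients, so the next iterate is well defined.

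The heart of the argument is a two-parameter Gronwall bound. For $\a\ge2$, let
$$
\Phi_n(s,t)=\E\sup_{r\le s,u\le t}\left(|x^{(n+1)}_{ru}-x^{(n)}_{ru}|^\a+|p^{(n+1)}_{ru}-p^{(n)}_{ru}|^\a+|q^{(n+1)}_{ru}-q^{(n)}_{ru}|^\a\right).
$$
Applying the BDG inequality (\ref{BDG}) to the two-parameter integral for $x^{(n+1)}-x^{(n)}$, and the one-parameter BDG together with Doob's inequality to the $p$ and $q$ differences, using the Lipschitz property of $a^M,c^M,e^M$ and the regularity hypothesis (which gives a uniform bound $Kds$, $Kdt$ on the relevant Lebesgue--Stieltjes measures), I expect a recursion of the form
$$
\Phi_{n+1}(s,t)\le K\int_0^s\int_0^t\Phi_n(r,u)\,drdu+K\int_0^s\Phi_n(r,t)\,dr+K\int_0^t\Phi_n(s,u)\,du.
$$
Iterating this on a fixed rectangle $[0,S]\times[0,T]$ yields a summable majorant, so $(x^{(n)},p^{(n)},q^{(n)})$ converges uniformly on compacts in $L^\a$ to a limit $(x^M,p^M,q^M)$ solving the truncated system on all of $(\R^+)^2$. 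The same recursion applied to the difference of any two solutions of the truncated system gives uniqueness.

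To return to $U$, define the adapted initial open set
$$
\cD_M=\{(s,t)\in(\R^+)^2:m(x^M_{ru},p^M_{ru},q^M_{ru})<M\text{ for all }(r,u)\in[0,s)\times[0,t)\}.
$$
On $\cD_M\cap\cD_{M'}$ with $M<M'$, the truncated solutions $(x^M,p^M,q^M)$ and $(x^{M'},p^{M'},q^{M'})$ agree by uniqueness for the $M'$-truncated system, since $a^{M'}=a^M$ and similarly for $c,e$ on $U_M$. Setting $\cD=\bigcup_M\cD_M$ and declaring $(x,p,q)=(x^M,p^M,q^M)$ on $\cD_M$ gives a consistent local solution in $U$, and comparison with any competitor on each $\cD_M$ yields both maximality and uniqueness. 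The explosion statement is then automatic: if $(s_n,t_n)\ua(s^*,t^*)\in\z(\cD)$, then $(s^*,t^*)\notin\cD_M$ for every $M$, so $\sup_{r\le s_n,u\le t_n}m(x_{ru},p_{ru},q_{ru})\ge M$ eventually.

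The main obstacle is the mixed two-parameter Gronwall recursion, which has to absorb simultaneously the genuinely two-parameter integral for $x$ (BDG contributes a $\int\!\!\int$ term) and the one-parameter integrals for $p$ and $q$ (BDG contributes single-integral terms in $s$ or $t$). A technical subtlety is that $d_rx_{rt}$ for fixed $t$ decomposes as $d_rx_{r0}$ plus the $r$-increment of the genuinely stochastic double integral in the equation for $x$; the regularity hypothesis on the boundary semimartingales is exactly what is required to control $d_rx_{r0}$ uniformly in $t$, and an analogous remark applies to $d_ux_{su}$.
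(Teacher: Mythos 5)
Your proposal is correct and follows essentially the same route as the paper: truncate $a,c,e$ to globally bounded Lipschitz coefficients agreeing with the originals on $U_M$, solve the truncated system globally, then localize via the adapted initial open sets $\cD_M$ determined by the level sets of $m$ and patch, reading off uniqueness, maximality and the explosion of $\sup m$ at $\z(\cD)$. The only difference is presentational: where you sketch the Picard iteration with the mixed two-parameter Gronwall recursion and BDG estimates, the paper simply cites the global existence--uniqueness result for bounded Lipschitz coefficients from its reference (Theorem 3.2.2 of the earlier work), which is proved there by exactly the argument you outline.
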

\begin{proof}
In the case where $m$ is bounded (so $U_M=U=\R^d\times\R^n\times\R^n$ for large $M$), the
existence of a (global) solution is proved in \cite[Theorem 3.2.2]{MR1347353}.
The proof is of a standard type, using Picard iteration,
Gronwall's lemma and Kolmogorov's continuity criterion, and gives also the uniqueness
of local solutions on the intersections of their domains.
When $m$ is unbounded, we can find, for each $M\ge1$, bounded Lipschitz coefficients
$a_M,c_M,e_M$ on $\R^d\times\R^n\times\R^n$, which agree with $a,c,e$ on $U_M$.
For each $M_0\ge1$, the corresponding global solutions $(x_{st}^M,p_{st}^M,q_{st}^M: s,t\ge0)$ agree, for all
integers $M\ge M_0$, almost surely, on $\cD_{M_0}$, where
$$
\cD_M=\{(s,t)\in(\R^+)^2:\sup_{r\le s,u\le t}m(x_{ru}^M,p_{ru}^M,q_{ru}^M)\le M\}.
$$
Hence, we obtain a local solution with all the claimed properties by setting $\cD=\cup_M\cD_M$ and
by setting, for all
$M\ge1$, $(x_{st},p_{st},q_{st})=(x_{st}^M,p_{st}^M,q_{st}^M)$ for all $(s,t)\in\cD_M\sm\cD_{M-1}$.
\end{proof}
Our main result deals with the case when $b$ is non-zero.

\begin{theorem}\label{TPS}
Assume that the coefficients $a,b,c,e$ are uniformly bounded and Lipschitz.
Then, for each set of regular semimartingale boundary values
$(x_{s0}:s\ge0)$, $(x_{0t}:t\ge0)$, $(p_{0t}:t\ge0)$, $(q_{s0}:s\ge0)$, 
the system of equations {\rm (\ref{EX}--\ref{EP})}
has a unique maximal regular solution, with domain $\cD$ say. 
As $(s,t)\ua\z(\cD)$, we have
\begin{equation}\label{UVD}
m_{st}=\sup_{s'\le s,t'\le t}|(u_{s't'},u_{s't'}^{-1},v_{s't'},v_{s't'}^{-1})|\to\infty.
\end{equation}
Moreover, if $c$ has Lipschitz first and second derivatives and has no dependence on $q$, 
then $(p_{st}:s,t\in\cD)$ is a semimartingale in $\cD$.
\end{theorem}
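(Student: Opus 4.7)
My plan is to address the three assertions in order.

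\emph{Existence, uniqueness, and the blow-up criterion.} I would enlarge the unknowns to the full vector $(x,p,q,u,u^*,v,v^*)$, observing that (\ref{EU}--\ref{EUS}) are one-parameter $s$-equations of exactly the same type as (\ref{EQ}) for $p$, and (\ref{EV}--\ref{EVS}) are one-parameter $t$-equations of the same type as (\ref{EP}) for $q$. The coefficients of the enlarged system are locally Lipschitz on the open subset $U$ of the augmented state space where $u$ and $v$ are invertible, and the controlling function
\[ m(x,p,q,u,u^*,v,v^*)=|u|+|u^{-1}|+|u^*|+|v|+|v^{-1}|+|v^*| \]
tends to infinity as $\pd U$ is approached. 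Since $b$ is uniformly Lipschitz, the Picard iteration scheme underlying Lemma \ref{BEZ} and \cite[Theorem 3.2.2]{MR1347353} carries through on the enlarged system: the $b$-terms are absorbed by Gronwall's inequality and the $a$- and $b$-contributions to the first- and second-kind stochastic integrals are controlled by (\ref{BDG}). Localization by $m$ then produces a unique maximal regular local solution with domain $\cD$, and the blow-up statement \eqref{UVD} is the standard explosion output of this localization.

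\emph{Strategy for the semimartingale property of $p$.} This is the substantive content of the theorem. When $c$ has no dependence on $q$, the defining relation
\[ \ds p_{st}=c(x_{st},p_{st})(\ds x_{st}) \]
is a Stratonovich one-parameter SDE in $s$ for each fixed $t$. My strategy is to construct a candidate process $r_{st}$ playing the role of $\dt p_{st}$ by formally differentiating this relation in $t$, using the Stratonovich product and chain rules of Section \ref{TS}, and then to read off from $r_{st}$ the previsible kernel that exhibits $p$ as a $t$-semimartingale. After some bookkeeping, one obtains a one-parameter $s$-equation that is linear in $r_{st}$,
\[ \ds r_{st}=(\nabla_xc)(\dt x_{st},\ds x_{st})+(\nabla_pc)(r_{st},\ds x_{st})+c(\ds\dt x_{st})+(\text{quadratic corrections in differentials}), \]
with all coefficients evaluated at $(x_{st},p_{st})$. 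Since $x$ is already a two-parameter semimartingale, $\dt x_{st}$ and $\ds\dt x_{st}$ are available, and this equation falls into the class covered by the first part of the theorem. Solving it by variation of constants along the flow of $\nabla_pc(\cdot,\ds x_{st})$ gives $r_{st}$ explicitly as an $s$-semimartingale, from which one reads off a candidate previsible kernel $(p'_{su}(r'):r'\in(-1,s],\,s,u\ge0)$ on the strip appearing in the definition of a $t$-semimartingale.

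\emph{Main obstacle.} The real work is to verify that the candidate kernel so produced actually realises the $t$-semimartingale representation
\[ p_{st}-p_{s0}=\sum_{i=0}^m\int_{-1}^s\int_0^t(p'_{su}(r'))_i\,d_{r'}d_uw^i_{r'u}, \]
and thus that $p$ is a full two-parameter semimartingale. This is in essence a Fubini-type exchange between the outer $s$-integral defining $p_{st}$ and the $t$-differentiation producing $r$, and it is precisely at this point that the sketch in \cite[pp.~299,\,315--316]{MR1347353} is incomplete. I would establish the identity first for Picard-style approximations in which $c$ is replaced by a polynomial and the driving processes mollified, where the exchange reduces to classical calculus, and then pass to the limit using the Burkholder--Davis--Gundy bound \eqref{BDG}, applied uniformly on a localizing sequence $\cD_M\ua\cD$, to secure $L^\a$ convergence of the first- and second-kind stochastic integrals that represent $x$, $r$ and their approximations. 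It is here that Lipschitz control of the \emph{second} derivatives of $c$, and not just the first, is essential: without a bound on $\nabla^2c$ the quadratic-in-$\ds x$ terms generated by the Stratonovich expansion fail to survive the $t$-differentiation as bona fide previsible integrands. The measurability and previsibility properties of $p'_{su}(r')$ required by the definition in Section \ref{TS} are then read off from its explicit construction.
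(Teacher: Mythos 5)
Your existence step contains a genuine gap. You propose to run the Picard scheme of Lemma \ref{BEZ} and \cite[Theorem 3.2.2]{MR1347353} on the enlarged system $(x,p,q,u,u^*,v,v^*)$, with the $b$-terms ``absorbed by Gronwall''. But Lemma \ref{BEZ} is stated and proved only for $b=0$, and the term $b(d_sx_{st},d_tx_{st})$ is exactly what it cannot handle: it is an integral of type $\z^4$ in the unknown's own differentials, and to estimate the difference of two such integrals for two candidate iterates one needs control of the kernels in their semimartingale representations, not merely sup- or $L^\a$-bounds on the processes, so a naive Gronwall argument does not close. The whole point of the paper's proof is to remove this obstruction by a change of variables: one introduces auxiliary processes $y,z$ and the flow processes $u,u^*,v,v^*$ and writes the transformed system (\ref{EQY}--\ref{EQQQ}), which \emph{does} have the form (\ref{EX}--\ref{EP}) with $b=0$, applies Lemma \ref{BEZ} there, and then reconstructs $x$ through (\ref{EQXX}) and (\ref{EQHX}); uniqueness is obtained by running the transformation backwards on an arbitrary regular solution. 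This transformation is the missing idea, and without it your first paragraph is an assertion rather than a proof. A second, related gap: since your localizing gauge includes $|u^*|+|v^*|$, the explosion criterion you obtain is weaker than (\ref{UVD}) --- it allows $u^*$ or $v^*$ to blow up while $u,u^{-1},v,v^{-1}$ stay bounded. Ruling that out is a substantial part of the paper's proof (the moment bound on $g(s,t)$ and the $L^\a$-H\"older estimates (\ref{HSU})--(\ref{HTUS}) together with \cite[Theorem 3.2.1]{MR1347353}), and your proposal omits it.

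For the semimartingale property of $p$ your route (formally differentiate $\ds p_{st}=c(\ds x_{st})$ in $t$, solve a linear equation for a candidate $r_{st}=\dt p_{st}$, and verify the $t$-semimartingale representation by mollified approximation) differs from the paper's, and as sketched it stops exactly at the hard point. The Fubini-type exchange you defer to a mollification-plus-BDG limit is precisely the step the paper identifies as incomplete in \cite[pp. 299, 315--316]{MR1347353}; two-parameter Wong--Zakai-type limits produce correction terms, and you give no argument that the approximate kernels converge to a previsible kernel realising $p$, nor that the representation holds on all of $\cD$ rather than on a possibly smaller domain. The paper avoids this by augmenting the state to $\tilde x_{st}=(x_{st},p_{st})$, using It\^o's formula and the Lipschitz bounds on $c',c''$ to write a genuine hyperbolic system (\ref{EQTX}--\ref{EQQT}) with bounded Lipschitz coefficients, invoking the already-established existence and uniqueness so that $p$ is a semimartingale by construction, identifying the two solutions via the block structure of $\tilde u,\tilde v$, and then --- the bulk of the work --- proving $\tilde\cD=\cD$ by showing $\tilde m_{st}$ stays bounded on $\tilde\cD\cap\cD_{M,N}$ through H\"older estimates on $p$, $\tilde u$, $\tilde u^{-1}$, $\tilde u^*$ and $v^{21}_{st}$. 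Your proposal never addresses this domain-matching issue, so even granted your first part, the conclusion that $p$ is a semimartingale \emph{in} $\cD$ would not follow.
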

\begin{proof}
We consider first the question of existence. We follow, to begin, the strategy used
in the proof of \cite[Theorem 3.2.3]{MR1347353}. Consider the following system
of differential equations, for unknown processes $y_{st},z_{st}$, $x_{st}',u_{st},u_{st}^*,p_{st}$,
$x_{st}'',v_{st},v_{st}^*,q_{st}$, taking values in $\R^d,\R^d$, $\R^d,\R^d\otimes(\R^d)^*,
\R^d\otimes(\R^d)^*\otimes(\R^d)^*,\R^n$,$\R^d,\R^d\otimes(\R^d)^*,
\R^d\otimes(\R^d)^*\otimes(\R^d)^*,\R^n$ respectively:
\begin{align}
d_sd_ty_{st}&=u_{st}^{-1}a(d_sd_tw_{st})
-u^*_{st}(u_{st}^{-1}a(d_sd_tw_{st})\otimes u_{st}^{-1}a(d_sd_tw_{st})),\label{EQY}\\
d_sd_tz_{st}&=v_{st}^{-1}a(d_sd_tw_{st})
-v^*_{st}(v_{st}^{-1}a(d_sd_tw_{st})\otimes v_{st}^{-1}a(d_sd_tw_{st})),\label{EQZ}\\
d_sx_{st}'&=v_{st}(d_sz_{st}+v^*_{st}d_sz_{st}\otimes d_sz_{st}),\label{EQXX}\\
d_su_{st}&=b_{11}(v_{st}(d_sz_{st}+v^*_{st}d_sz_{st}\otimes d_sz_{st}),\cdot)u_{st}
+b_{21}(v_{st}d_sz_{st},v_{st}d_sz_{st},\cdot)u_{st},\label{EQUU}\\
d_su_{st}^*&
=u_{st}^{-1}\{b_{12}(v_{st}(d_sz_{st}+v^*_{st}d_sz_{st}\otimes d_sz_{st}),u_{st}\cdot,u_{st}\cdot)\notag\\
&\q\q+b_{22}(v_{st}d_sz_{st},v_{st}d_sz_{st},u_{st}\cdot,u_{st}\cdot)
     -b_{11}(v_{st}d_sz_{st},b_{12}(v_{st}d_sz_{st},u_{st}\cdot,u_{st}\cdot))\},\label{EQUS}\\
d_sp_{st}&=c(v_{st}(d_sz_{st}+v^*_{st}d_sz_{st}\otimes d_sz_{st})),\label{EQPP}\\
d_tx_{st}''&=u_{st}(d_ty_{st}+u^*_{st}d_ty_{st}\otimes d_ty_{st}),\label{EQHX}\\
d_tv_{st}&=b_{11}(\cdot,u_{st}(d_ty_{st}+u^*_{st}d_ty_{st}\otimes d_ty_{st}))v_{st}
+b_{12}(\cdot,u_{st}d_ty_{st},u_{st}d_ty_{st})v_{st},\label{EQVV}\\
d_tv_{st}^*&
=v_{st}^{-1}\{b_{21}(v_{st}\cdot,v_{st}\cdot,u_{st}(d_ty_{st}+u^*_{st}d_ty_{st}\otimes d_ty_{st}))\notag\\
&\q\q+b_{22}(v_{st}\cdot,v_{st}\cdot,u_{st}d_ty_{st},u_{st}d_ty_{st})
     -b_{11}(b_{21}(v_{st}\cdot,v_{st}\cdot,u_{st}d_ty_{st}),u_{st}d_ty_{st})\},\label{EQVS}\\
d_tq_{st}&=e(u_{st}(d_ty_{st}+u^*_{st}d_ty_{st}\otimes d_ty_{st}))\label{EQQQ}.
\end{align}
We evaluate the coefficients $a$, $b$, $c$ and $e$ here at $(x_{st}',p_{st},q_{st})$ (rather than
at $x_{st}''$).
Note that this system has the same form as the system (\ref{EX}--\ref{EP}) with $b=0$.
We use the boundary conditions given above for $u_{st},p_{st},v_{st},q_{st}$.
Define boundary values for $y_{st}$ and $z_{st}$ by
\begin{equation}\label{BVYZ}
d_sy_{s0}=d_sz_{s0}=v_{s0}^{-1}d_sx_{s0},\q d_ty_{0t}=d_tz_{0t}=u_{0t}^{-1}d_tx_{0t},
\q y_{00}=z_{00}=0.
\end{equation}
Set $u_{0t}^*=v_{s0}^*=0$ and use the given boundary values $(x_{0t}:t\ge0)$
for $x_{st}'$ and $(x_{s0}:s\ge0)$ for $x_{st}''$.
Define, on the set $U$ where $u$ and $v$ are invertible,
$$
m(y,z,x',u,u^*,p,x'',v,v^*,q)=|(u,u^{-1},v,v^{-1})|+|(u^*,v^*)|.
$$
Then the preceding lemma applies, to show that 
(\ref{EQY}--\ref{EQQQ})
has a unique maximal local solution 
with the given boundary values, with domain $\cD$ say, such that 
$u_{st}$ and $v_{st}$ are invertible for all $(s,t)\in\cD$, and such that,
almost surely, as $t\ua\z(\cD)$, either
\begin{equation}
m_{st}=\sup_{s'\le s,t'\le t}|(u_{s't'},u_{s't'}^{-1},v_{s't'},v_{s't'}^{-1})|\ua\infty,\label{MTI}
\end{equation}
or
\begin{equation}
n_{st}=\sup_{s'\le s,t'\le t}|(u_{s't'}^*,v_{s't'}^*)|\ua\infty\label{NTI}.
\end{equation}

Now $v_{st}$ and $v_{st}^*$ are continuous $t$-semimartingales (in $\cD$) and $z_{st}$
is a semimartingale. Moreover $d_ta_{st}d_sd_tz_{st}=0$ for any $t$-semimartingale
$a_{st}$. Hence, by \cite[Theorem 2.3.1]{MR1347353}, $x_{st}'$ is a semimartingale and we may
take the $t$-differential in (\ref{EQXX}) to obtain
\begin{align*}
d_sd_tx_{st}'&=d_tv_{st}(d_sz_{st}+v_{st}^*d_sz_{st}\otimes d_sz_{st})\\
&+v_{st}(d_sd_tz_{st}+d_tv_{st}^*d_sz_{st}\otimes d_sz_{st}+v_{st}^*d_sd_tz_{st}\otimes d_sd_tz_{st})
+d_tv_{st}(d_tv^*_{st}d_sz_{st}\otimes d_sz_{st})\\
&=a(d_sd_tw_{st})+b(d_sx_{st}',d_tx_{st}'').
\end{align*}
Similarly, by taking the $s$-differential in (\ref{EQHX}), we obtain
$$
d_sd_tx_{st}''=a(d_sd_tw_{st})+b(d_sx_{st}',d_tx_{st}'').
$$
We also have $x_{00}'=x_{00}''$ and
$$
d_sx_{s0}'=v_{s0}d_sz_{s0}=d_sx_{s0}'',\q
d_tx_{0t}'=u_{0t}d_ty_{0t}=d_tx_{0t}'',
$$
so $x_{st}'=x_{st}''$ for all $(s,t)\in\cD$, almost surely. Denote the common value of these
processes by $x_{st}$. Then $(x_{st}:(s,t)\in\cD)$ satisfies (\ref{EX}). 
On using (\ref{EQXX}) and (\ref{EQHX})
to substitute\footnote{Such substitutions result in differential formulae corresponding
to valid identities between processes. This is because the two-parameter stochastic differential calculus
is associative, as mentioned above, and as discussed in \cite[pp. 290--291]{MR1347353}.}  for $d_sz_{st}$ and $d_ty_{st}$ in (\ref{EQUU}, \ref{EQPP}, \ref{EQVV}, \ref{EQQQ}),
we see also that $p_{st}$, $q_{st}$, $u_{st}$, $u_{st}^*$, $v_{st}$, $v_{st}^*$ 
satisfy (\ref{EQ}--\ref{EVS})
respectively. 
Hence $(x_{st},p_{st},q_{st}:(s,t)\in\cD)$ is a regular local solution to (\ref{EX}--\ref{EP}),
which is moreover maximal by virtue of (\ref{MTI}--\ref{NTI}).

We turn to the question of uniqueness. 
Suppose that $(\tilde x_{st},\tilde p_{st},\tilde q_{st}:(s,t)\in\tilde \cD)$ is any 
regular local solution to (\ref{EX}--\ref{EP}).
Write $(\tilde u_{st},\tilde u^*_{st},\tilde v_{st},\tilde v^*_{st}:(s,t)\in\tilde\cD)$ for the associated 
processes, satisfying (\ref{EU}--\ref{EVS}).
Define semimartingales $(\tilde y_{st}:(s,t)\in\tilde \cD)$ and $(\tilde z_{st}:(s,t)\in\tilde \cD)$
by 
\begin{align}\label{YTST}
d_sd_t\tilde y_{st}&=\tilde u_{st}^{-1}a(d_sd_tw_{st})
-\tilde u^*_{st}(\tilde u_{st}^{-1}a(d_sd_tw_{st})\otimes \tilde u_{st}^{-1}a(d_sd_tw_{st})),\\
\label{ZTST}
d_sd_t\tilde z_{st}&=\tilde v_{st}^{-1}a(d_sd_tw_{st})
-\tilde v^*_{st}(\tilde v_{st}^{-1}a(d_sd_tw_{st})\otimes\tilde v_{st}^{-1}a(d_sd_tw_{st})),
\end{align}
with boundary values (\ref{BVYZ}).
The following equations may be verified by checking that the initial values and differentials 
of left and right hand sides agree
\begin{equation} \label{DSX}
d_s\tilde x_{st}=\tilde v_{st}(d_s\tilde z_{st}+\tilde v^*_{st}d_s\tilde z_{st}\otimes d_s\tilde z_{st}),\q
d_t\tilde x_{st}=\tilde u_{st}(d_t\tilde y_{st}+\tilde u^*_{st}d_t\tilde y_{st}\otimes d_t\tilde y_{st}).
\end{equation}
Then, using these equations to substitute for $d_s\tilde x_{st}$ and $d_t\tilde x_{st}$ in 
(\ref{EQ}--\ref{EVS}), we see that \linebreak
 $(\tilde y_{st},\tilde z_{st},\tilde x_{st},\tilde u_{st},\tilde u^*_{st},\tilde p_{st},\tilde x_{st},\tilde v_{st},\tilde v_{st}^*,\tilde q_{st}:(s,t)\in\tilde \cD)$
is a local solution to (\ref{EQY}--\ref{EQQQ}).
By local uniqueness for this system, $\tilde\cD\sse\cD$ and 
$(\tilde x_{st},\tilde p_{st},\tilde q_{st})=(x_{st},p_{st},q_{st})$
for all $(s,t)\in\tilde\cD$, almost surely. Thus $(x_{st},p_{st},q_{st}:(s,t)\in\cD)$ is the unique 
maximal regular local solution to (\ref{EX}--\ref{EP}).

Our next goal is to obtain $\a$th-moment and $L^\a$-H\" older 
estimates on the process \linebreak $(x_{st},p_{st},q_{st},u_{st},u_{st}^*,v_{st},v_{st}^*:(s,t)\in\cD)$, for $\a\in[2,\infty)$.
Write $K$ for a uniform bound on $a,b,c,e$ which is also a Lipschitz constant for $b$.
Fix $M,N,T\ge1$ and set
\begin{align*}
\cD_M&=\{(s,t)\in\cD:s,t\le T\text{ and }m_{st}\le M\},\\
\cD_{M,N}&=\{(s,t)\in\cD:s,t\le T,m_{st}\le M\text{ and }n_{st}\le N\}.
\end{align*}
Fix $\a$ and define
$$
g(s,t)=\sup_{s'\le s,t'\le t}\E(|(u_{s't'}^*,v_{s't'}^*)|^\a1_{\{(s',t')\in\cD_{M,N}\}}).
$$
Let $(a_s:s\ge0)$ be a locally bounded, $(\cF_{s\infty})_{s\ge0}$-previsible process.
The following identities follow from equations (\ref{ZTST}) and (\ref{DSX}): 
for $(s,t)\in\cD$, respectively in $\R^d$ and $\R^d\otimes\R^d$,
\begin{equation}\label{ADX}
\int_0^sa_rd_rx_{rt}
=\int_0^sa_rd_rx_{r0}
+\int_0^s\int_0^ta_rv_{rt}\left\{v_{ru}^{-1}a(d_rd_uw_{ru})
+(v_{rt}^*-v_{ru}^*)(v_{ru}^{-1}a(d_rd_uw_{ru}))^{\otimes 2}\right\}
\end{equation}
and
\begin{equation}\label{ADXX}
\int_0^sa_rd_rx_{rt}\otimes d_rx_{rt}
=\int_0^sa_rd_rx_{r0}\otimes d_rx_{r0}
+\int_0^s\int_0^ta_r(v_{rt}v_{ru}^{-1}a(d_rd_uw_{ru}))^{\otimes 2}.
\end{equation}
Hence, using the estimate (\ref{BDG}),
we obtain a constant $C=C(\a,K,M,T)<\infty$ such that, for all $s,t\ge0$,
\begin{align}\label{BDGX}
&\E\left(\left|\int_0^s
a_rd_rx_{rt}\right|^\a1_{\{(s,t)\in\cD_{M,N}\}}\right)\notag\\
&\q\q\le C\E\left(\left|\left(\int_0^s
a_r^2dr\right)^{1/2}
+\int_0^s\int_0^t
|a_r|(|v_{rt}^*|+|v_{ru}^*|)drdu\right|^{\a}1_{\{(s,t)\in\cD_{M,N}\}}\right)
\end{align}
and
\begin{equation}\label{BDGXX}
\E\left(\left|\int_0^s
a_rd_rx_{rt}\otimes d_rx_{rt}\right|^\a1_{\{(s,t)\in\cD_{M,N}\}}\right)
\le C\E\left(\left|\int_0^s
|a_r|dr\right|^\a1_{\{(s,t)\in\cD_{M,N}\}}\right).
\end{equation}
Here and below, we suppress any dependence of constants on the dimensions
$d,n,m$. If we allow $C$ to depend also on $N$, then (\ref{BDGX}) may be
simplified to
\begin{equation}\label{BDGXL}
\E\left(\left|\int_0^s
a_rd_rx_{rt}\right|^\a1_{\{(s,t)\in\cD_{M,N}\}}\right)
\le C\E\left(\left|\int_0^s a_r^2dr \right|^{\a/2}1_{\{(s,t)\in\cD_{M,N}\}}\right)
\end{equation}
We use these estimates, along with analogous estimates for integrals $d_tx_{st}$,
in the equations (\ref{EUS}) and (\ref{EVS}), to arrive at the inequality
$$
g(s,t)\le C\left(1+\int_0^sg(s',t)ds'+\int_0^tg(s,t')dt'\right),
$$
for a constant $C=C(\a,K,M,T)<\infty$. Since $N<\infty$, we know that $g(s,t)<\infty$
for all $s,t$, so this inequality implies that $g(s,t)\le C$ for another constant 
$C<\infty$ of the same dependence.
Similar arguments yield
a further constant $C<\infty$ of the same dependence such that, for all $s,s'\ge0$ and all $t,t'\ge0$,
\begin{equation}\label{HSU}
\E(|(x_{st},u_{st},u^*_{st},p_{st})-(x_{s't},u_{s't},u^*_{s't},p_{s't})|^\a1_{\{(s,t),(s',t)\in\cD_{M,N}\}})
\le C|s-s'|^{\a/2}
\end{equation}
and
\begin{equation}\label{HSV}
\E(|(x_{st},v_{st},v^*_{st},q_{st})-(x_{st'},v_{st'},v^*_{st'},q_{st'})|^\a1_{\{(s,t),(s,t')\in\cD_{M,N}\}})
\le C|t-t'|^{\a/2}.
\end{equation}
Here, we have used Cauchy--Schwarz to obtain in an intermediate step
$$
\int_s^{s'}\int_0^t|v_{ru}^*|drdu\le|s-s'|^{1/2}\left(\int_s^{s'}\int_0^t|v_{ru}^*|^2drdu\right)^{1/2}.
$$
On going back to (\ref{ADX}) and (\ref{ADXX}) with these H\" older estimates, we obtain, using
(\ref{BDG}) again, a constant $C<\infty$ of the same dependence such that
\begin{equation}\label{TTPX}
\E\left(\left|\int_0^s a_r(d_rx_{rt}-d_rx_{rt'})\right|^\a1_{\{(s,t),(s,t')\in\cD_{M,N}\}}\right)
\le C|t-t'|^{\a/2}\left(\E\left|\int_0^sa_r^2ds\right|^\a\right)^{1/2}
\end{equation}
and
\begin{align}
&\E\left(\left|\int_0^s a_rd_rx_{rt}\otimes(d_rx_{rt}-d_rx_{rt'})\right|^\a1_{\{(s,t),(s,t')\in\cD_{M,N}\}}\right)\notag\\
&\q\q\q\q\q\q\q\q\q\q\q\q\q\q\q\q\q\q\q\q\le C|t-t'|^{\a/2}\left(\E\left|\int_0^sa_r^2ds\right|^\a\right)^{1/2}.\label{TTPXX}
\end{align}
Now
\begin{align*}
d_s(u_{st}^{-1}u_{st'})&=u_{st}^{-1}\{b(x_{st'},d_sx_{st'},\cdot)-b(x_{st},d_sx_{st},\cdot)\}u_{st'}\\
&\q\q
-u_{st}^{-1}b_{11}(x_{st},d_sx_{st},\cdot)
\{b_{11}(x_{st'},d_sx_{st'},\cdot)-b_{11}(x_{st},d_sx_{st},\cdot)\}u_{st'}.
\end{align*}
We have made explicit the dependence of $b$ and $b_{11}$ on $x_{st}$ or $x_{st'}$.
We use the estimates (\ref{BDGX}), (\ref{BDGXX}), (\ref{HSV}--\ref{TTPXX}) to find a
constant $C=C(\a,K,M,T)<\infty$ such that
\begin{equation}\label{HTU}
\E(|u_{st}-u_{st'}|^\a1_{\{(s,t),(s,t')\in\cD_{M,N}\}})\le C|t-t'|^{\a/2}.
\end{equation}
Moreover, the same estimates, applied to the difference of (\ref{EUS}) at $t$ and at $t'$,
show that $C$ may be chosen such that
\begin{equation}\label{HTUS}
\E(|u_{st}^*-u_{st'}^*|^\a1_{\{(s,t),(s,t')\in\cD_{M,N}\}})\le C|t-t'|^{\a/2}.
\end{equation}
Since $C$ does not depend on $N$, by monotone convergence, 
we can replace $\cD_{M,N}$
by $\cD_M$ in these estimates
By symmetry, there are analogous estimates for $v_{st}$ and $v_{st}^*$.
Hence, using \cite[Theorem 3.2.1]{MR1347353}, almost surely, for all $M\ge1$,
$n_{st}$ remains bounded on $\cD_M$. Thus (\ref{NTI}) implies (\ref{MTI})
so, in any case, (\ref{UVD}) holds.

It remains to consider the case where $c$ has Lipschitz first and second derivatives and has
no dependence on $q$, and to show then that $(p_{st}:(s,t)\in\cD)$ is a semimartingale.
For ease of writing, we shall assume that $c$ has no dependence on $x$ either. This
is done without loss of generality, by the device of adding to our system the
equation $d_sx_{st}=d_sx_{st}$, thus making $x_{st}$ a component of $p_{st}$.

We seek to find a solution in a smaller class of processes, in which $p_{st}$ is a semimartingale. 
Recall that 
\begin{equation}\label{NPE}
d_sp_{st}=c(d_sx_{st})=c_1(p_{st})(d_sx_{st})+c_2(p_{st})(d_sx_{st},d_sx_{st}).
\end{equation}
By It\^ o's formula, if $p_{st}$ is a semimartingale, then
\begin{align*}
d_sd_t p_{st}
&=c'(d_tp_{st},d_sx_{st})+\tfrac12c''(d_tp_{st},d_tp_{st},d_sx_{st})+c(d_sd_tx_{st})+c'(d_tp_{st},d_sd_tx_{st})\\
&\q\q\q\q+2c_2(d_sx_{st},d_sd_tx_{st})+2c_2'(d_tp_{st},d_sx_{st},d_sd_tx_{st})\\
&=c'(d_tp_{st},d_sx_{st})+\tfrac12c''(d_tp_{st},d_tp_{st},d_sx_{st})+c(a(d_sd_tw_{st}))+c(b(d_sx_{st},d_tx_{st}))\\
&\q\q\q\q
+c'(d_tp_{st},b(d_sx_{st},d_tx_{st}))
+2c_2(d_sx_{st},b(d_sx_{st},d_tx_{st}))\\
&\q\q\q\q
+2c_2'(d_tp_{st},d_sx_{st},b(d_sx_{st},d_tx_{st})).
\end{align*}
Here we are writing $c',c''$ for the derivatives with respect to $p$.
We set $\tilde d=d+n$ and combine this equation with the 
equation (\ref{EX}) to obtain a
two-parameter equation for the $\R^{\tilde d}$-valued process 
$\tilde x_{st}=
\begin{pmatrix}
x_{st}\\p_{st}
\end{pmatrix}$, which
we can write in the form
\begin{align}
d_sd_t\tilde x_{st}&=\tilde a(d_sd_tw_{st})+\tilde b(d_s\tilde x_{st},d_t\tilde x_{st}).\label{EQTX}
\end{align}
(The $\sim$ notation in this paragraph has nothing to do with that used
in the paragraph on uniqueness above.)
We impose regular semimartingale initial values 
$\tilde x_{s0}=
\begin{pmatrix}
x_{s0}\\p_{s0}
\end{pmatrix}$ and $\tilde x_{0t}=
\begin{pmatrix}
x_{0t}\\p_{0t}
\end{pmatrix}$, where $(p_{s0}:s\ge0)$ is obtained by solving the 
one-parameter
equation (\ref{NPE}) along $x_{s0}$. 
Introduce the two companion equations for $\tilde d\times\tilde d$ matrix-valued
processes $\tilde u_{st}$ and $\tilde v_{st}$
\begin{align}
d_s\tilde u_{st}&=\tilde b_{11}(d_s\tilde x_{st},\cdot)\tilde u_{st}+\tilde b_{12}(d_s\tilde x_{st},d_s\tilde x_{st},\cdot)\tilde u_{st},\label{EQTU}\\
d_t\tilde v_{st}&=\tilde b_{11}(\cdot,d_t\tilde x_{st})\tilde v_{st}+\tilde b_{21}(\cdot,d_t\tilde x_{st},d_t\tilde x_{st})\tilde v_{st}.\label{EQTV}
\end{align}
Impose boundary conditions for $\tilde u_{st}$ 
and $\tilde v_{st}$ analogous to those for $u_{st}$ and $v_{st}$.
Write (\ref{EP}) in the form
\begin{equation}\label{EQQT}
d_t\tilde q_{st}=\tilde e(d_t\tilde x_{st}).
\end{equation}
By assumption, there exists a $K'<\infty$ which is both a uniform bound for $a,b,c,e$ and is also 
a Lipschitz constant for $b,c,c',c''$.
We can then find a uniform bound $\tilde K<\infty$ on $\tilde a,\tilde b, \tilde e$,
 which is also a Lipschitz constant for $\tilde b$, and which depends only on $K'$. 
The above argument shows that the system of equations (\ref{EQTX}--\ref{EQQT})
has a unique maximal regular solution $(\tilde x_{st},\tilde q_{st},\tilde u_{st},\tilde v_{st}:(s,t)\in\tilde\cD)$, 
with the property that,
as $(s,t)\ua\zeta(\tilde\cD)$, almost surely,
$$
\tilde m_{st} :=\sup_{r\leq s,u\leq t} |(\tilde u_{ru},\tilde u_{ru}^{-1},\tilde v_{ru},\tilde v_{ru}^{-1})|\ua\infty.
$$
Write
$$
\tilde x_{st}=
\begin{pmatrix}
x_{st}^1\\x_{st}^2
\end{pmatrix},\q
\tilde u_{st}=
\begin{pmatrix}
u_{st}^{11}&u^{12}_{st}\\ u^{21}_{st}&u^{22}_{st}
\end{pmatrix},\q
\tilde v_{st}=
\begin{pmatrix}
v_{st}^{11}&v^{12}_{st}\\ v^{21}_{st}&v^{22}_{st}
\end{pmatrix},
$$
and use analogous block notation for the tensors $\tilde u_{st}^*$ and $\tilde v_{st}^*$.
Note that
$$
\tilde b(d_s\tilde x_{st},\cdot)
=\begin{pmatrix}
b(d_sx_{st}^1,\cdot)&0\\ f(d_sx_{st}^1)&c'(\cdot,d_sx_{st}^1)
\end{pmatrix},\q
\tilde b(\cdot,d_t\tilde x_{st})
=\begin{pmatrix}
b(\cdot,d_tx_{st}^1)&0\\ g(d_t\tilde x_{st})&0
\end{pmatrix},
$$
where
\begin{align*}
f(d_sx_{st}^1)&=c(b(d_sx_{st}^1,\cdot))+2c_2(d_sx_{st}^1,b(d_sx_{st}^1,\cdot)),\\
g(d_t\tilde x_{st})&=c'(d_tx_{st}^2,\cdot)+\tfrac12c''(d_tx_{st}^2,d_tx_{st}^2,\cdot)
+c(b(\cdot,d_tx_{st}^1))+c'(d_tx_{st}^2,b(\cdot,d_tx_{st}^1)).
\end{align*}
Here, we have written $b(d_sx_{st},\cdot)$ as a short form of $b_{11}(d_sx_{st},\cdot)+b_{12}(d_sx_{st},d_sx_{st},\cdot)$, and
analogously for $b(\cdot,d_tx_{st})$ and $\tilde b(d_s\tilde x_{st},\cdot)$. 
On multiplying out in blocks, we see that the process
$(x_{st}^1,x_{st}^2,\tilde q_{st},u_{st}^{11},(u_{st}^*)^{111},v_{st}^{11},(v_{st}^*)^{111}:(s,t)\in\tilde\cD)$ 
satisfies equations (\ref{EX}--\ref{EVS}).
Hence, we must have $\tilde\cD\sse\cD$ and 
$(x_{st}^1,x_{st}^2,\tilde q_{st},u_{st}^{11},v_{st}^{11})=(x_{st},p_{st},q_{st},u_{st},v_{st})$ for all $(s,t)\in\tilde\cD$. 
In particular, $(p_{st}:(s,t)\in\tilde\cD)$ is a semimartingale.

It remains to show that $\tilde\cD=\cD$, which we can do by showing that,
almost surely, $\tilde m_{st}$ remains bounded on $\tilde\cD_{M,N}=\tilde\cD\cap\cD_{M,N}$, for all $M,N\ge1$.
We first obtain a H\" older estimate in $t$ for $p_{st}$.
We have 
$$
d_s(p_{st}-p_{st'})=c(p_{st},d_sx_{st})-c(p_{st'},d_sx_{st'}),
$$
where we have now made the dependence of $c$ on $p$ explicit.
Set
$$
f(s)=\E\left(|p_{st}-p_{st'}|^\a1_{\{(s,t),(s,t')\in\tilde\cD_{M,N}\}}\right).
$$
We use the estimates (\ref{BDGXX}) and (\ref{BDGXL}) to obtain
a constant $C=C(\a,K',M,N,T)<\infty$ such that
$$
f(s)\le C\left(|t-t'|^{\a/2}+\int_0^sf(r)dr\right).
$$
This implies that $f(s)\le C|t-t'|^{\a/2}$ for all $s\ge0$ for a constant $C<\infty$
of the same dependence.
We now know that, for such a constant $C<\infty$, we have
\begin{equation}\label{PSTH}
\E\left(\left|p_{s't'}-p_{st}\right|^\a
1_{\{(s,t),(s',t')\in\tilde\cD_{M,N}\}}\right)
\le C(|s-s'|^{\a/2}+|t-t'|^{\a/2}).
\end{equation}
We turn to $\tilde u_{st}$ and $\tilde v_{st}$.
The following equations hold
$$
d_su_{st}^{12}=b(d_sx_{st},\cdot)u_{st}^{12},\q
d_tv_{st}^{12}=b(\cdot,d_tx_{st})v_{st}^{12},\q
d_tv_{st}^{22}=g(d_t\tilde x_{st})v_{st}^{12}.
$$
By uniqueness of solutions, we obtain 
$u^{12}_{st}=u_{st}u_{0t}^{-1}u_{0t}^{12}$
so, in particular, $u^{12}_{s0}=0$.
Similarly, $v^{12}_{st}=v_{st}v_{s0}^{-1}v_{s0}^{12}$,
so $v^{12}_{0t}=0$.
Since $\tilde u_{0t}=\tilde v_{0t}$ and $\tilde u_{s0}=\tilde v_{s0}$,
we deduce that $u_{st}^{12}=v_{st}^{12}=0$.
Then $d_tv^{22}_{st}=0$, so $v^{22}_{st}=v^{22}_{s0}=u^{22}_{s0}$.
We also have the equations
$$
d_su^{21}_{st}=f(d_sx_{st})u_{st}+c'(.,d_sx_{st})u^{21}_{st},\q
d_su^{22}_{st}=c'(.,d_sx_{st})u^{22}_{st},\q
d_tv^{21}_{st}=g(d_t\tilde x_{st})v_{st}
$$
and we note that
$$
\tilde u^{-1}_{st}=
\begin{pmatrix}
u^{-1}_{st}&0\\ -(u^{22}_{st})^{-1}u^{21}_{st}u_{st}^{-1}&(u^{22}_{st})^{-1}
\end{pmatrix},\q
\tilde v^{-1}_{st}=
\begin{pmatrix}
v^{-1}_{st}&0\\ -(v^{22}_{st})^{-1}v^{21}_{st}v_{st}^{-1}&(v^{22}_{st})^{-1}
\end{pmatrix},
$$
and 
$$
d_s(u^{22}_{st})^{-1}=-(u^{22}_{st})^{-1}c'(.,d_sx_{st})+(u^{22}_{st})^{-1}c'(.,d_sx_{st})c'(.,d_sx_{st}).
$$
We use the inequalities (\ref{BDGXX}), (\ref{BDGXL}) and (\ref{PSTH}), and an
easy variation of the argument leading to (\ref{HSU}) and (\ref{HTU}) to obtain
a constant $C=C(\a,K',M,N,T)<\infty$ such that
\begin{equation}\label{TUH}
\E\left(\left|(\tilde u_{s't'},\tilde u_{s't'}^{-1})-(\tilde u_{st},\tilde u_{st}^{-1})\right|^\a
1_{\{(s,t),(s',t')\in\tilde\cD_{M,N}\}}\right)
\le C(|s-s'|^{\a/2}+|t-t'|^{\a/2}).
\end{equation}
Then, using \cite[Theorem 3.2.1]{MR1347353} as above, we can conclude that,
almost surely, $(\tilde u_{st},\tilde u_{st}^{-1})$ remains bounded on
$\tilde\cD_{M,N}$.
It remains to show that the same is true for 
$(\tilde v_{st},\tilde v_{st}^{-1})$ and, given the relations already noted,
it will suffice to show this for $v_{st}^{21}$. We have
\begin{align*}
d_s\tilde u_{st}^*&=\tilde u_{st}^{-1}\{\tilde b_{12}(d_s\tilde x_{st},\tilde u_{st}\cdot,\tilde u_{st}\cdot)\\
&\q\q+\tilde b_{22}(d_s\tilde x_{st},d_s\tilde x_{st},\tilde u_{st}\cdot,\tilde u_{st}\cdot)
-\tilde b_{11}(d_s\tilde x_{st},\tilde b_{12}(d_s\tilde x_{st},\tilde u_{st}\cdot,\tilde u_{st}\cdot))\}\\
&=h(x_{st},p_{st},\tilde u_{st},\tilde u_{st}^{-1},d_sx_{st}),
\end{align*}
where $h$ is defined by the final equality and where we have used (\ref{EQ}) to write $d_s\tilde x_{st}$ in terms
of $d_sx_{st}$.
A variation of the argument used for $\tilde u_{st}$ shows that, almost surely, $\tilde u_{st}^*$
remains bounded on $\tilde\cD_{M,N}$.
Then, we can use the $\sim$ and $t$-analogue of equations (\ref{ADX}) and (\ref{ADXX})
to express $v_{st}^{21}$ as a sum of integrals with respect to $(x_{0t},p_{0t}:t\ge0)$
and $(w_{st}:s,t\ge0)$. This leads, as above, to $L^\a$-H\" older estimates which allow us
to conclude that, almost surely, $v_{st}^{21}$ remains
bounded on $\tilde\cD_{M,N}$, as required.
\end{proof}
\def\backup{\bg
Write 
$$
g(d_t\tilde x_{st})=g_{st}d_t\tilde x_{st}+g^{(2)}_{st}d_t\tilde x_{st}\otimes d_t\tilde x_{st},
$$
where $g_{st}=\g(x_{st},p_{st})$ and $g^{(2)}_{st}=\g^{(2)}(x_{st},p_{st})$, for some 
functions $\g,\g^{(2)}$, which can be expressed in terms of $a,b,c$. Then 
\begin{align*}
v_{st}^{21}&=v_{s0}^{21}
+\int_0^tg_{su}d_u\tilde x_{0u}v_{su}
+\int_0^tg_{su}^{(2)}(d_u\tilde x_{0u}\otimes d_u\tilde x_{0u})v_{su}\\
&\q\q
\int_0^s\int_0^tg_{su}\tilde u_{su}\{\tilde u_{ru}^{-1}\tilde a(d_rd_uw_{ru})
+(\tilde u_{su}^*-\tilde u_{ru}^*)(\tilde u_{ru}^{-1}\tilde a(d_rd_uw_{ru}))^{\otimes2}\}v_{su}\\
&\q\q
\int_0^s\int_0^tg_{su}^{(2)}(\tilde u_{su}\tilde u_{ru}^{-1}\tilde a(d_rd_uw_{ru}))^{\otimes2}v_{su}.
\end{align*}
We already have H\" older estimates on all the factors in the integrands, so this expression
allows us to deduce the claimed estimates on $v_{st}^{21}$.
\eg}

\section{Derivation of the formula}\label{C}
Let $(w_{st}:s,t\ge0)$ be an $\R^m$-valued Brownian sheet and let
$(z_{s0}:s\ge0)$ be an independent $\R^m$-valued Brownian motion.
Thus $w_{st}=(w^1_{st},\dots,w^m_{st})$ and $z_{s0}=(z^1_{s0},\dots,z^m_{s0})$,
and each component process is an independent scalar Brownian sheet, or Brownian
motion, respectively. The two-parameter hyperbolic stochastic differential
equation
\begin{equation}\label{OUSDE}
d_sd_tz_{st}=d_sd_tw_{st}-\tfrac12d_sz_{st}dt, \q s,t\ge0,
\end{equation}
with given boundary values $(z_{s0}:s\ge0)$ and $z_{0t}=0$, for $t\ge0$, 
has a unique
solution $(z_{st}:s,t\ge0)$. Set $z_t=(z_{st}:s\ge0)$, then $(z_t)_{t\ge0}$ 
is a realization of the Ornstein-Uhlenbeck process on the 
$m$-dimensional Wiener space. See \cite{MR536013} or \cite{MR1347353}.
The Stratonovich form of (\ref{OUSDE}) is given by
$$
\ds\dt z_{st}=\ds\dt w_{st}-\tfrac12\ds z_{st}\pd t, \q s,t\ge0.
$$
Fix $x\in\R^d$ and consider for each $t\ge0$ the Stratonovich
stochastic differential equation 
$$
\ds x_{st}=X_i(x_{st})\ds z^i_{st}+X_0(x_{st})\pd s, \q s\ge0,
$$
with initial value $x_{0t}=x$. This can be written in It\^ o form as
\begin{equation}\label{XSDE}
d_s x_{st}=X_i(x_{st})d_s z^i_{st}+\tilde X_0(x_{st})ds, \q s\ge0,
\end{equation}
where $\tilde X_0=X_0+\frac12\sum_{i=1}^d\nabla X_i.X_i$.
Consider also, for each $t\ge0$, the stochastic differential equation
$$
\ds U_{st}=\nabla X_i(x_{st})U_{st}\ds z^i_{st}+\nabla X_0(x_{st})U_{st}\pd s, \q s\ge0,
$$
with initial value $U_{0t}=I$, and its It\^ o form 
\begin{equation}\label{USDE}
d_s U_{st}=\nabla X_i(x_{st})U_{st}d_s z^i_{st}+\nabla\tilde X_0(x_{st})U_{st}ds, \q s\ge0.
\end{equation}

\begin{proposition}
There exist (two-parameter) semimartingales $(z_{st}:s,t\ge0)$, $(x_{st}:s,t\ge0)$
and $(U_{st}:s,t\ge0)$ such that $(z_{st}:s,t\ge0)$ satisfies (\ref{OUSDE}) and, for all
$t\ge0$, $(x_{st}:s\ge0)$ and $(U_{st}:s\ge0)$ satisfy (\ref{XSDE}) and (\ref{USDE}),
with the boundary conditions given above. Moreover, almost surely, $U_{st}$ is invertible
for all $s,t\ge0$.
\end{proposition}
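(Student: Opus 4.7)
My plan is to cast the three equations (\ref{OUSDE}), (\ref{XSDE}), (\ref{USDE}) as a single system of the form (\ref{EX})--(\ref{EP}) and invoke Theorem \ref{TPS}. Take the $(s,t)$-semimartingale of the theorem to be the augmented process $\bar z_{st} := (s, z_{st}, t) \in \R^{m+2}$, the two deterministic coordinates being adjoined purely so that the $-\tfrac12\, d_s z_{st}\, dt$ term in (\ref{OUSDE}) fits inside a $b_{11}$-contribution to $b(d_s \bar z_{st}, d_t \bar z_{st})$, and so that the drifts $\tilde X_0(x_{st})\,ds$ and $\nabla \tilde X_0(x_{st}) U_{st}\,ds$ from (\ref{XSDE}) and (\ref{USDE}) fit inside the linear coefficient $c_1(d_s \bar z_{st})$. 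Take the $s$-integrated process of the theorem to be $p_{st} := (x_{st}, U_{st}, V_{st})$, where $V_{st}$ is an auxiliary $d\times d$-matrix-valued process intended to equal $U_{st}^{-1}$, governed by the linear It\^o SDE formally obtained from $d_s(V_{st}U_{st}) = 0$ together with (\ref{USDE}), and take $q \equiv 0$.

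The coefficients $a$ and $b$ so obtained are uniformly bounded and Lipschitz. The coefficient $c$ is not, since it is linear in $(U, V)$ and the $X_i$ have at most linear growth in $x$; but it is smooth in $p$ with bounded derivatives of every order in the matrix variables, and smooth in $x$ with polynomial growth in $(U, V)$, thanks to the bounded-derivatives-of-all-orders hypothesis on the $X_i$. I would localize exactly in the spirit of Lemma \ref{BEZ}: pick the exhausting continuous function $m(x, U, V) = |x| + |U| + |V|$, replace $c$ by bounded Lipschitz cutoffs $c^M$ that agree with $c$ on $\{m \le M\}$ and whose first and second derivatives are also Lipschitz, and apply Theorem \ref{TPS} to each level. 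Since $c^M$ is $q$-independent, the final clause of Theorem \ref{TPS} yields a unique maximal regular solution $(z^M, p^M)$ on some domain $\cD_M$ with $p^M_{st}$ a genuine two-parameter semimartingale. Patching over $M$ gives a maximal solution on $\cD := \bigcup_M \cD_M$.

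To show $\cD = (\R^+)^2$ I would fix $T > 0$ and apply standard one-parameter $L^\a$ moment estimates fiberwise in $t \in [0, T]$: for each such $t$, equations (\ref{XSDE}), (\ref{USDE}) and the companion equation for $V_{st}$ are one-parameter It\^o SDEs driven by the semimartingale $(z_{st})_{s\ge 0}$, with globally Lipschitz coefficients of linear growth, so a Gronwall argument produces uniform $L^\a$ bounds for $(x_{st}, U_{st}, V_{st})$ on $[0, S]\times[0, T]$ and rules out explosion, forcing $\cD_M$ to exhaust $(\R^+)^2$ as $M\to\infty$. Finally, invertibility of $U_{st}$ follows by applying the one-parameter It\^o product formula to $V_{st} U_{st}$ fiberwise in $t$: the It\^o corrections in the defining SDE for $V$ were arranged precisely so that $d_s(V_{st} U_{st}) = 0$, so $V_{st} U_{st} = V_{0t} U_{0t} = I$ for all $s, t \ge 0$, almost surely. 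The main technical obstacle is the joint-in-$(s,t)$ semimartingale regularity of $p_{st}$, which is exactly what the final clause of Theorem \ref{TPS} was designed to supply; beyond that, everything reduces to the localization scheme of Lemma \ref{BEZ} and one-parameter Gronwall.
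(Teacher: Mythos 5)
Your reduction is, in outline, the same as the paper's: you adjoin deterministic coordinates so that the $ds$ and $dt$ terms become part of the coefficients evaluated against differentials of the driving $(s,t)$-semimartingale (the paper adjoins a single process $z^0_{st}$ with $d_sd_tz^0_{st}=0$, $z^0_{st}=s+t$), you place $x_{st}$, $U_{st}$ (and a candidate inverse) among the $p$-components so that the final clause of Theorem \ref{TPS} delivers the two-parameter semimartingale property, and you handle the unbounded coefficients by smooth truncation and patching by local uniqueness, exactly in the spirit of the paper's cutoff $\psi_M$. The invertibility argument via the companion process $V$ is also the paper's ("the usual equation for the inverse"). The genuine gap is in your argument that $\cD=(\R^+)^2$. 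First, a point you never address: the explosion criterion (\ref{UVD}) of Theorem \ref{TPS} is stated in terms of the linearized companion processes $u_{st},v_{st}$ and their inverses, so even for the truncated systems one must check that these stay bounded on compacts for the Ornstein--Uhlenbeck coupling term $-\tfrac12 d_sz_{st}\,dt$; the paper flags this explicitly (and leaves it to the reader), whereas in your scheme the domains $\cD_M$ produced by Theorem \ref{TPS} could a priori be strictly smaller than the region where the solution stays below level $M$, which also complicates the patching.

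Second, and more seriously, your non-explosion step does not work as stated. Fixing $t$ and running a one-parameter Gronwall argument gives, for each fixed $t$, an almost sure statement (global solvability of the fiber equation) and pointwise moment bounds $\sup_{s\le S,\,t\le T}\E(|U_{st}|^\a\mathbf{1}_{\{(s,t)\in\cD\}})<\infty$. Neither suffices: the null set depends on $t$, and the blow-up of $m_{st}$ as $(s,t)\ua\zeta(\cD)$ involves a supremum over the whole rectangle $[0,s]\times[0,t]$, so the maximal domain can fail to contain $[0,S]\times[0,T]$ even though every fixed fiber is almost surely globally solvable and all pointwise moments are finite; pointwise $L^\a$ bounds on a random field do not give almost sure boundedness of the field on a compact rectangle. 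What is needed, and what the paper actually proves, is a joint two-parameter $L^\a$-H\"older estimate of the form
\begin{equation*}
\E\left(|U_{st}-U_{s't'}|^\a\mathbf{1}_{\{(s,t),(s',t')\in\cD\}}\right)\le C\left(|s-s'|^{\a/2}+|t-t'|^{\a/2}\right),
\end{equation*}
uniform in the truncation level, which is then fed into the Kolmogorov-type result \cite[Theorem 3.2.1]{MR1347353} to conclude that, almost surely, $U$ is uniformly bounded on $\cD\cap[0,T]^2$, whence $\cD=(\R^+)^2$. Your proposal is missing this joint-in-$(s,t)$ regularity estimate and the chaining step that converts it into an almost sure uniform bound; replacing "fiberwise Gronwall rules out explosion" by this two-parameter argument is the essential missing ingredient.
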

\begin{proof}
We seek to apply Theorem \ref{TPS}. There are three minor obstacles: firstly to deal with the
$ds$ and $dt$ differentials appearing in the equations, secondly, to show that the domain of the
solutions is the whole of $(\R^+)^2$ and, thirdly, to deal with the fact that the coefficients
in (\ref{USDE}) do not have the required boundedness of derivatives.

Let us introduce a further equation
$$
d_sd_tz^0_{st}=0,
$$
with boundary conditions $z^0_{s0}=s$ and $z_{0t}=t$ for all $s,t\ge0$.
We then replace $dt$ and $ds$ in (\ref{OUSDE}) and (\ref{XSDE}), respectively, by $d_tz^0_{st}$
and $d_sz_{st}^0$.
When we obtain a solution, it will follow that $z^0_{st}=s+t$, so $d_tz^0_{st}=dt$
and $d_sz_{st}^0=ds$, as required.

In order to show that $\cD=(\R^+)^2$, it will suffice to show that the companion processes
$u_{st}$ and $v_{st}$ associated with the equations
$$
d_sd_tz^0_{st}=0,\q d_sd_tz_{st}=d_sd_tw_{st}-\frac12d_sz_{st}d_tz^0_{st},
$$
according to equations (\ref{EU}) and (\ref{EV}),
along with their inverses, remain bounded on compacts in $s$ and $t$. 
We leave this to the reader.

Finally, choose for each $M\in\N$ a smooth and compactly supported function $\psi_M$
on $\R^d\otimes(\R^d)^*$, such that $\psi_M(U)=U$ whenever $|U|\le M$. We can apply Theorem \ref{TPS}
to the system (\ref{OUSDE}), (\ref{XSDE}), together with the modified equation
\begin{equation*}\label{wSDE}
d_s U_{st}^M=\nabla X_i(x_{st})\psi_M(U_{st}^M)d_s z^i_{st}+\nabla\tilde X_0(x_{st})\psi_M(U_{st}^M)ds.
\end{equation*}
Define
$$
\cD_M=\{(s,t):|U_{s't'}^M|\le M\text{ for all }s'\le s,t'\le t\}.
$$
By local uniqueness, we can define consistently $U$ on $\cD=\cup_M\cD_M$ by $U_{st}=U^M_{st}$
for $(s,t)\in\cD_M$. By some straightforward estimation using the one-parameter
equations (\ref{USDE}), we obtain, for all $T<\infty$ and all $p\in[1,\infty)$, a constant
$C<\infty$ such that
$$
\sup_{s,s',t,t'\le T}\E(|U_{st}-U_{s't'}|^p1_{\{(s,t),(s't')\in\cD\}})\le C(|s-s'|^{p/2}+|t-t'|^{p/2}).
$$
Then, by  \cite[Theorem 3.2.1]{MR1347353}, almost surely, $U$ is bounded uniformly
on $\cD\cap[0,T]^2$. Hence $\cD=(\R^+)^2$, and we have obtained the desired semimartingale $U$.
The invertibility of $U$ can be proved by applying the same argument to the usual equation
for the inverse.
\end{proof}

By the Stratonovich chain rule,
$$ 
\ds\dt x_{st}=\nabla X_i(x_{st})\ds z^i_{st}\dt x_{st}
+\nabla X_0(x_{st})\pd s\dt x_{st}
+X_i(x_{st})\ds\dt z^i_{st}.
$$
Now
\begin{align*}
\ds\dt U_{st}&=
\nabla X_i(x_{st})\ds z^i_{st}\dt U_{st}
+\nabla X_0(x_{st})\pd s\dt U_{st}\\
&\q\q+(\nabla^2 X_i(x_{st})\dt x_{st})U_{st}\ds z^i_{st}
+(\nabla^2 X_0(x_{st})\dt x_{st})U_{st}\pd s
+\nabla X_i(x_{st})U_{st}\ds\dt z^i_{st},
\end{align*}
so 
$$
\dt U_{st}\ds\dt z_{st}^i=\tfrac12\ds\dt U_{st}\ds\dt w_{st}^i
=\tfrac12\nabla X_i(x_{st})U_{st}\pd s\pd t
$$
and
$$
\ds(U^{-1}_{st}\dt U_{st})
=U^{-1}_{st}\left\{
\nabla^2X_i(x_{st})\ds z^i_{st}\dt x_{st}
+\nabla^2X_0(x_{st})\pd s\dt x_{st}
+\nabla X_i(x_{st})\ds\dt z^i_{st}\right\}U_{st}.
$$
Define also a two-parameter, $\R^d$-valued, semimartingale $(y_{st}:s,t\ge0)$ by
$$
\dt y_{st}=U^{-1}_{st}\dt x_{st}, \q y_{s0}=0.
$$
Then
$$
\ds\dt y_{st}=U^{-1}_{st}X_i(x_{st})\ds\dt z^i_{st}.
$$
Note that
$$
\dt y_{st}\ds\dt z_{st}^i=\dt y_{st}\ds\dt w_{st}^i
=\tfrac12\ds\dt y_{st}\ds\dt w_{st}^i=\tfrac12U^{-1}_{st}X_i(x_{st})\pd s\pd t.
$$
So
$$
\ds(\dt y_{st}\otimes\dt y_{st})
=\ds\dt y_{st}\otimes\dt y_{st}+\dt y_{st}\otimes\ds\dt y_{st}
=U^{-1}_{st}X_i(x_{st})\otimes U^{-1}_{st}X_i(x_{st})\pd s\pd t.
$$
Note also that
$$
\ds(U^{-1}_{st}X_i(x_{st}))=U_{st}^{-1}[X_i,X_j](x_{st})\ds z^j_{st}
+U_{st}^{-1}[X_i,X_0](x_{st})\pd s.
$$
So
$$
\ds(U^{-1}_{st}X_i(x_{st}))\ds\dt z^i_{st}
=U_{st}^{-1}[X_i,X_j](x_{st})\ds z^j_{st}(\ds\dt w_{st}^i-\tfrac12\ds z_{st}^i\pd t)=0.
$$ 
Moreover
$$
\dt(U^{-1}_{st}X_i(x_{st}))d_s\dt z^i_{st}
=\dt(U^{-1}_{st}X_i(x_{st}))d_s\dt w^i_{st}=0.
$$
Hence, we have
$$
d_sd_t y_{st}=U^{-1}_{st}X_i(x_{st})d_sd_t z^i_{st}=U^{-1}_{st}X_i(x_{st})(\ds\dt w_{st}^i-
\tfrac12\ds z_{st}^i\pd t).
$$
We compute
\begin{align*}
&\ds(U^{-1}_{st}\dt U_{st}\dt y_{st})\\
&\q\q=U^{-1}_{st}\left\{
\nabla^2X_i(x_{st})\ds z^i_{st}+\nabla^2X_0(x_{st})\pd s\right\}
\dt x_{st}\otimes\dt x_{st}
+U^{-1}_{st}\nabla X_i(x_{st})X_i(x_{st})\pd s\pd t. 
\end{align*}
Define 
$$
R_{st}=-\int_0^sU^{-1}_{rt}X_i(x_{rt})d_r z^i_{rt},\q
C_{st}=\int_0^sU^{-1}_{rt}X_i(x_{rt})\otimes U^{-1}_{rt}X_i(x_{rt})dr.
$$
Our calculations show that the $(\cF_{st}:t\ge0)$-semimartingale
$(y_{st}:t\ge0)$ has finite-variation part $(\bar y_{st}:t\ge0)$ and
quadratic variation given by
$$
d_t\bar y_{st}=\tfrac12 R_{st}dt,\q \dt y_{st}\otimes\dt y_{st}=C_{st}dt.
$$
Moreover
$$
d_tx_{st}=U_{st}d_t y_{st}+\tfrac12\dt U_{st}\dt y_{st},
$$
so $(x_{st}:t\ge0)$ has finite-variation part $(\bar x_{st}:t\ge0)$ and
quadratic variation given by
$$
d_t\bar x_{st}=\tfrac12 L_{st}dt,\q \dt x_{st}\otimes\dt x_{st}=\G_{st}dt,
$$
where
\begin{align*}
L_{st}=U_{st}R_{st}&+U_{st}\int_0^sU_{rt}^{-1}
\{\nabla^2X_i(x_{rt})\dr z^i_{rt}+\nabla^2X_0(x_{rt})\pd r\}\G_{rt}\\
&\q\q+U_{st}\int_0^sU_{rt}^{-1}\nabla X_i(x_{rt})X_i(x_{rt})\pd r
\end{align*}
and where $\G_{st}=U_{st}C_{st}U_{st}^*$.

Note that both $(\G_{st}:t\ge0)$ and $(L_{st}:t\ge0)$ are stationary
processes and that, by standard one-parameter estimates,
$\G_{s0}$ and $L_{s0}$ have finite moments of all orders.
By It\^ o's formula, for any $C^2$ function $f$, setting $f_{st}=f(x_{st})$,
the process 
$(f_{st}:t\ge0)$ is an $(\cF_{st}:t\ge0)$-semimartingale
with finite-variation part $(\bar f_{st}:t\ge0)$ and
quadratic variation given by
$$
d_t\bar f_{st}=\tfrac12\left(L_{st}^i\nabla_if(x_{st})+
\G_{st}^{ij}\nabla_i\nabla_jf(x_{st})\right)dt,\q
\dt f_{st}\dt f_{st}=\nabla_i f(x_{st})\G_{st}^{ij}\nabla_j f(x_{st})dt.
$$
In particular, if $m_{st}=f_{st}-f_{s0}-\bar f_{st}$, then $(m_{st}:t\ge0)$
is a (true) martingale. 
Hence, for $f,g\in C^2_b(\R^d)$, we obtain the integration-by-parts
formula
\begin{align*}
&\E[\nabla_i f(x_{s0})\G_{s0}^{ij}\nabla_j g(x_{s0})]
=\lim_{t\da0}\frac1t\E\left[\{f(x_{st})-f(x_{s0})\}\{g(x_{st})-g(x_{s0})\}\right]\\
&\q=-2\lim_{t\da0}\frac1t\E[f(x_{s0})\{g(x_{st})-g(x_{s0})\}]
=-\E[f(x_{s0})\{L_{s0}^i\nabla_ig(x_{s0})+
\G_{s0}^{ij}\nabla_i\nabla_jg(x_{s0})\}].
\end{align*}
An obvious limit argument allows us to deduce the following simple formula,
corresponding to the case $g(x)=x^j$. For all $f\in C^2_b(\R^d)$
and for
$j=1,\dots,d$, we have
$$
\E[\nabla_if(x_{s0})\G_{s0}^{ij}]=-\E[f(x_{s0})L_{s0}^j].
$$
The general formula can then be recovered by replacing $f$ by $f\nabla_jg$
and summing over $j$.

The basic observation underlying this formula is that the distributions
of $(z_0,z_t)$ and $(z_t,z_0)$ are identical,
and hence that the same is true for $(x_{s0},x_{st})$ and $(x_{st},x_{s0})$,
when $(x_{st}:s\ge0)$ is obtained by solving
a stochastic differential equation driven by $(z_{st}:s\ge0)$,
with initial condition independent of $t$.
In fact a stronger notion of reversibility is true.
The distributions of
$(z_{su}:s\ge0,u\in[0,t])$ and $(z_{s,t-u}:s\ge0,u\in[0,t])$ 
are identical, and hence the same is true for 
$(x_{su}:s\ge0,u\in[0,t])$ and $(x_{s,t-u}:s\ge0,u\in[0,t])$.
This may be combined with the fact that the
Stratonovich integral is invariant under time-reversal to see that
$$
\E\left[\{f(x_{st})-f(x_{s0})\}\int_0^tU_{su}^{-1}\du x_{su}\right]
=-2\E\left[f(x_{s0})\int_0^tU_{su}^{-1}\du x_{su}\right].
$$
From this identity, by a similar argument, 
we obtain the following alternative integration-by-parts formula.
For all $f\in C^2_b(\R^d)$, we have
$$
\E[\nabla f(x_{s0})U_{s0}C_{s0}]=-\E[f(x_{s0})R_{s0}].
$$
This formula is the variant discovered by Bismut, which is closely related
to the Clark--Haussmann formula.

\bibliography{pp}

\end{document}